\documentclass[11pt]{amsart}

\usepackage{latexsym,rawfonts}
\usepackage{amsfonts,amssymb}
\usepackage{amsmath,amsthm}

\usepackage[plainpages=false]{hyperref}
\usepackage{graphicx}
\usepackage{color}
\usepackage[table]{xcolor}
\usepackage{longtable}

\textwidth      = 6.00in
\textheight     = 8.30in
\oddsidemargin  = 0.30in
\evensidemargin = 0.30in
\voffset        = -10pt

\pagestyle{plain}


\newcommand{\R}{\mathbb{R}}



\setlength{\parskip}{0.20\baselineskip plus 2pt}

\newcommand{\ko}{\mathcal K_0^n}
\newcommand{\koo}{\mathcal K^n}
\newcommand{\ke}{\mathcal K_e^n}
\newcommand{\rnnn}{\mathbb R^{n}}

\newcommand{\rt}{\mathbb R^{n-1}}
\newcommand{\sn}{ {\mathbb{S}^{n-1}}}
\newcommand{\rn}{\mathbb R}

\newcommand{\psum}{{+_{\negthinspace\kern-2pt p}}\,}
\newcommand{\qsum}[1]{{+_{\negthinspace\kern-2pt #1}}\,}
\newcommand{\dpsum}{{\tilde+_{\negthinspace\kern-1pt p}}\,}
\newcommand{\dqsum}[1]{{\tilde+_{\negthinspace\kern-1pt #1}}\,}
\newcommand{\lsub}[1]{\hskip -1.5pt\lower.5ex\hbox{$_{#1}$}}

\numberwithin{equation}{section}

\newtheorem{theo}{Theorem}[section]
\newtheorem{coro}[theo]{Corollary}
\newtheorem{lem}[theo]{Lemma}

\theoremstyle{definition}


\begin{document}

\title{The torsion log-Minkowski problem}

\author[J. Hu]{Jinrong Hu}
\address{School of Mathematics, Hunan University, Changsha, 410082, Hunan Province, China}
\email{hujinrong@hnu.edu.cn}

\begin{abstract}
In this paper, we deal with the torsion log-Minkowski problem without symmetry assumptions via an approximation argument.
\end{abstract}
\keywords{Torsional rigidity, log-Minkowski problem}
\subjclass[2010]{52A20, 52A40.}

\maketitle

\baselineskip18pt

\parskip3pt

\section{Introduction}
\label{Sec1}
In the framework of the Brunn-Minkowski theory of convex bodies, the classic Minkowski problem is of central importance. It was introduced and solved by Minkowski himself in \cite{M897, M903} and  there are many excellent subsequent works
(see, e.g., ~\cite{A39,A42,FJ38,B87}). With the development of Minkowski problem, the $L_p$ Minkowski problem was first posed and solved by Lutwak ~\cite{L93} as an analogue of the classic Minkowski problem within the $L_p$ Brunn-Minkowski theory. After that, the $L_{p}$ Minkowski problem has been the breeding ground equipped with many valuable results in a series of paper ~\cite{B17,CL17,B19,F62,L04,Zhu14,Zhu15,Zh15}. In the critical index $p=0$, the $L_{p}$ Minkowski problem is reduced to the logarithmic Minkowski problem prescribing cone-volume measure, which was first solved by B\"{o}r\"{o}czky-Lutwak-Yang-Zhang \cite{BLYZ12} for symmetric convex bodies.  Later, Zhu \cite{Zhu14} solved the log-Minkowski problem for polytopes, and Chen-Li-Zhu \cite{CL19} attacked the general case. There are some other important researches on extensions and analogues of the $L_p$ Minkowski problems for other Borel measures associated with the boundary-value problem (for example, capacity, torsional rigidity), see, e.g.,
\cite{J96,J962,CF10,C15,FZH20,H181,Xiong19,XX22} and their references.

In this paper, we focus on solving the  log-Minkowski problem for torsional rigidity. Recall that the torsional rigidity $T(\Omega)$ of a convex body $\Omega$ in the $n$-dimensional Euclidean space ${\rnnn}$, is defined as
\begin{equation*}\label{tordef}
\frac{1}{T(\Omega)}=\inf\left\{\frac{\int_{\Omega}|\nabla u|^{2}{d}x}{(\int_{\Omega}|u|{d}x)^{2}}: \ u\in W^{1,2}_{0}(\Omega),\ \int_{\Omega}|u|{d}x> 0\right\},
\end{equation*}
where $W^{1,2}_{0}(\Omega)$ denotes the Sobolev space of functions having compact support in $W^{1,2}(\Omega)$, while $W^{1,2}(\Omega)$ represents the Sobolev space of functions having weak derivatives up to first order in $L^{2}(\Omega)$. From the perspective of physics, in the circumstance that $\Omega\subset \rt$ is the cross section of a cylindrical rod $\Omega \times \rn$ under torsion, the torsion rigidity of $\Omega$ is the torque required for unit angle of twist per unit length.

In the background of analysis, the torsional functional can be given by the solution of an elliptic boundary-value problem (see, e.g., ~\cite{CA05}). To be more specific, let $u$ be the unique solution of
\begin{equation}\label{torlapu}
\left\{
\begin{array}{lr}
\Delta u= -2, & x\in \Omega, \\
u=0,  & x\in  \partial \Omega.
\end{array}\right.
\end{equation}
Then
\begin{equation}\label{tordef2}
T(\Omega)=\int_{\Omega}|\nabla u|^{2}{d}x=2\int_{\Omega}udx.
\end{equation}

If the boundary $\partial\Omega$ is of class $C^{2}$, employing the standard regularity results of elliptic equations (see, e.g., Gilbarg-Trudinger \cite{GT01}), we see that $\nabla u$ can be suitably defined $\mathcal{H}^{n-1}$ a.e. on $\partial\Omega$, and $T(\Omega)$ can be expressed as (see, e.g., Proposition 18 of ~\cite{CA05}),
\begin{equation}
\begin{split}\label{tordef3}
T(\Omega)&=\frac{1}{n+2}\int_{\partial\Omega}h(\Omega,g_{\Omega}(x))|\nabla u(x)|^{2}{d}\mathcal{H}^{n-1}(x)\\
&=\frac{1}{n+2}\int_{\sn}h(\Omega,v)|\nabla u(g^{-1}_{\Omega}(v))|^{2}{d}S(\Omega,v),
\end{split}
\end{equation}
where $S(\Omega,\cdot)$ is the surface area measure of $\Omega$.\cite[Theorem 3.1]{CF10} indicated that $~\eqref{tordef3}$ also holds for any convex body in ${\rnnn}$ and established the Hadamard variational formula of $T(\cdot)$, given as (see \cite[Theorem 4.1]{CF10}),
\begin{equation}\label{torhadma}
\frac{d}{dt}T(\Omega+t\Omega_{1})\Big|_{t=0}=\int_{\sn}h(\Omega_{1},v){d}\mu^{tor}(\Omega,v),
\end{equation}
 where the torsion measure $\mu^{tor}(\Omega,\eta)$ is defined by
\begin{equation}\label{tormes2}
\mu^{tor}(\Omega,\eta)=\int_{g^{-1}_{\Omega}(\eta)}|\nabla u(x)|^{2}{d}\mathcal{H}^{n-1}(x)=\int_{\eta}|\nabla u(g^{-1}_{\Omega}(v))|^{2}{d}S(\Omega,v)
\end{equation}
for every Borel subset $\eta\subset {\sn}$.

Proposition 2.5 of ~\cite{CF10} revealed that $\nabla u$ has finite non-tangential limits $\mathcal{H}^{n-1}$ a.e. on $\partial \Omega$ by extending the estimates of harmonic functions proved by Dahlberg \cite{D77} and $|\nabla u|\in L^{2}(\partial \Omega, \mathcal{H}^{n-1})$ without the assumption of smoothness, which illustrate that $~\eqref{tormes2}$ is well-defined a.e. on the unit sphere ${\sn}$, not limited to the case of smoothness, and can be regarded as a Borel measure.

In \cite{CF10}, the Minkowski problem for torsional rigidity was first posed: If $\mu$ is a finite Borel measure on ${\sn}$, what are necessary and sufficient conditions on $\mu$ such that $\mu$ is the torsion measure $\mu^{tor}(\Omega,\cdot)$ of a convex body $\Omega$ in ${\rnnn}$? Colesanti-Fimiani \cite{CF10} proved the existence and uniqueness up to translations of the solution via the variational method, which was firstly proposed by Aleksandrov ~\cite{A39, A42} and was later adopted by Jerison ~\cite{J96}, Colesanti-Nystr\"{o}m-Salani-Xiao-Yang-Zhang ~\cite{C15}.

 Similar to the $L_{p}$ surface measure (see, e.g., ~\cite{S14}),  recently, Chen-Dai \cite{Chen20} defined the $L_{p}$ torsion measure $\mu^{tor}_{p}(\Omega,\cdot)$ by
\begin{equation}\label{pmea1}
\mu^{tor}_{p}(\Omega,\eta)=\int_{g^{-1}_{\Omega}(\eta)}h(\Omega,g_{\Omega}(x))^{1-p}{d}\mu^{tor}(\Omega,\eta)=\int_{\eta}h(\Omega,v)^{1-p}{d}\mu^{tor}(\Omega,\eta)
\end{equation}
for every Borel subset $\eta$ of ${\sn}$. Naturally, the $L_{p}$ Minkowski problem for torsional rigidity first introduced by the authors \cite{Chen20} arises as: For $p\in {\R}$, given a finite Borel measure $\mu$ on ${\sn}$, what are the necessary and sufficient conditions on $\mu$ such that $\mu$ is the $L_{p}$ torsion measure $\mu^{tor}_{p}(\Omega,\cdot)$ of a convex body $\Omega$ in ${\rnnn}$? In \cite{Chen20}, the authors proved the existence and uniqueness of the solution when $p>1$. For the case $0<p<1$, Hu-Liu \cite{HL21} gave a sufficient condition. To our knowledge, there are few results on the existence of solution to the torsion log-Minkowski problem without symmetry assumptions in the case $p=0$. We will enrich this topic in this paper.

We first reveal the cone torsion measure $G^{tor}(\Omega,\cdot)$ of a convex body $\Omega$ via applying \eqref{torhadma}, which is defined for a Borel set $\eta\subset \sn$ by
\begin{equation}\label{CTC}
G^{tor}(\Omega,\eta)=\frac{1}{n+2}\int_{x\in g^{-1}_{\Omega}(\eta)}h(\Omega,g_{\Omega}(x))d\mu^{tor}(\Omega,\eta).
\end{equation}

The Minkowski problem prescribing cone-torsion measure is stated as:

{\bf The Torsion Log-Minkowski Problem.} If $\mu$ is a finite Borel measure on $\sn$, what are necessary and sufficient conditions on $\mu$ to guarantee the existence of a convex body $\Omega\subset \rnnn$ containing the origin that solves the equation
\begin{equation}\label{Gq}
G^{tor}(\Omega,\cdot)=\mu?.
\end{equation}

It was shown in \cite{LU23} with additional assumption that $\mu$ is even, and satisfies the following \emph{subspace mass inequality}
\begin{equation}\label{SB}
\frac{\mu(\xi_{k}\cap \sn)}{|\mu|}<\frac{k}{n}
\end{equation}
for each $k$-dimensional subspace $\xi_{k}\subset \rnnn$ and each $k=1,\ldots,n-1$. Then there exists an origin-symmetric convex body $\Omega$ in $\rnnn$ such that \eqref{Gq} holds.

In this paper, we shall show that the symmetric assumption can be removed.
We first deal with the polytopal case when the given normal vectors are in \emph{general position}.

\begin{theo}\label{main21}
Let $\mu$ be a discrete measure on $\rnnn$ whose support set is not contained in any closed hemisphere and is in general position in dimension $n$. Then there exists a polytope $P$ containing the origin in its interior such that
\[
G^{tor}(P,\cdot)=\mu.
\]

\end{theo}

By an approximation scheme, we shall give the sufficient condition to solve the general torsion log-Minkowski problem without symmetry assumptions.

\begin{theo}\label{main22}
If $\mu$ is a finite, non-zero Borel measure on $\sn$ satisfying the subspace mass inequality \eqref{SB}. Then there exists a convex body $\Omega$ in $\rnnn$ with $o\in \Omega$ such that
\[
G^{tor}(\Omega,\cdot)=\mu.
\]

\end{theo}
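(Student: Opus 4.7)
The plan is to reduce Theorem~\ref{main22} to the polytopal case (Theorem~\ref{main21}) by an approximation-and-compactness argument. First I would construct a sequence of discrete measures $\mu_j$ on $\sn$ with finite supports in general position such that $\mu_j$ converges weakly to $\mu$ and $|\mu_j|\to|\mu|$. Since the inequalities \eqref{SB} are strict, a sufficiently fine discretization — partition $\sn$ into small Borel cells, concentrate each cell's $\mu$-mass at a chosen representative direction, then perturb these directions into general position — produces $\mu_j$ that also satisfies \eqref{SB}; in particular, taking $k=n-1$ ensures $\SUPP\mu_j$ is not contained in any closed hemisphere. Theorem~\ref{main21} then supplies polytopes $P_j$ with $o\in\INT P_j$ and $G^{tor}(P_j,\cdot)=\mu_j$.

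The next task is to show that $\{P_j\}$ is precompact in the Hausdorff metric and that every subsequential limit is a full-dimensional convex body. The identity
\[
|\mu_j| = G^{tor}(P_j,\sn) = T(P_j),
\]
together with the scaling $T(\lambda K)=\lambda^{n+2}T(K)$, immediately controls the overall scale of $P_j$: since $|\mu_j|$ is uniformly bounded above and uniformly bounded away from $0$, $P_j$ can neither shrink to a point nor expand isotropically to infinity.

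The main obstacle is to rule out the remaining degenerate scenario: that along a subsequence $P_j$ either flattens toward or extends along a proper $k$-dimensional subspace $\xi_k$ for some $1\le k\le n-1$. Following the strategy developed for the classical log-Minkowski problem and adapted to torsion in \cite{LU23} in the symmetric case, I would argue by contradiction. Supposing such a degeneration, one shows that the cone-torsion mass of $P_j$ concentrates near $\xi_k\cap\sn$ in the sense that
\[
\liminf_{j\to\infty}\frac{G^{tor}\bigl(P_j,\,U_\varepsilon(\xi_k\cap\sn)\bigr)}{|\mu_j|}\ge \frac{k}{n}
\]
for every $\varepsilon>0$, where $U_\varepsilon$ denotes an $\varepsilon$-neighborhood in $\sn$. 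Sending $\varepsilon\to 0$ and using the weak convergence $\mu_j\to\mu$ then gives $\mu(\xi_k\cap\sn)/|\mu|\ge k/n$, contradicting \eqref{SB}. The technical heart of the proof is quantifying this concentration: one must analyze how the support function $h(P_j,\cdot)$ and the gradient factor $|\nabla u_{P_j}|^2$ in \eqref{tormes2} distribute across $\sn$ as $P_j$ degenerates, using the boundary regularity for the Dirichlet problem \eqref{torlapu} and the $L^2$-integrability of $\nabla u$ on $\partial P_j$ from \cite{CF10}, uniformly in $j$.

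Once precompactness and non-degeneracy are established, the Blaschke selection theorem yields a subsequential Hausdorff limit $\Omega$ which is a convex body. Since $o\in P_j$ for every $j$, the Hausdorff limit satisfies $o\in\Omega$. Finally, the weak continuity of $G^{tor}$ under Hausdorff convergence of convex bodies — which follows from continuity of the support function together with the continuity of $\mu^{tor}$ established in \cite{CF10} — yields
\[
G^{tor}(\Omega,\cdot)=\lim_{j\to\infty} G^{tor}(P_j,\cdot)=\lim_{j\to\infty}\mu_j=\mu,
\]
completing the proof.
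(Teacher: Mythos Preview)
Your overall framework---approximate $\mu$ by discrete measures $\mu_j$ with supports in general position, solve the polytopal problem via Theorem~\ref{main21}, and pass to a Hausdorff limit using weak continuity of $G^{tor}$---matches the paper exactly, as does the observation that $T(P_j)=|\mu_j|$ controls the overall scale.

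The gap is in the boundedness step. You propose to argue that if $P_j$ degenerates toward a $k$-dimensional subspace then the cone-torsion measure must concentrate there with mass fraction at least $k/n$, contradicting \eqref{SB}. You acknowledge this is the ``technical heart'' but give no mechanism for it; controlling how $h(P_j,\cdot)|\nabla u_{P_j}|^2\,dS(P_j,\cdot)$ redistributes under degeneration is genuinely delicate for torsion, and no such concentration lemma is established in the paper or in the references you cite. The paper does \emph{not} argue this way. Instead it exploits a feature of the polytopal solutions that you discard: each $P_j$ is not merely a solution of $G^{tor}(P_j,\cdot)=\mu_j$, but a \emph{minimizer} of the functional $\Phi_Q(\gamma(Q))=\int_{\sn}\log h(Q-\gamma(Q),\cdot)\,d\mu_j$ over $\{T(Q)=1\}$. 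Minimality gives a uniform upper bound $\Phi_{P_j,\mu_j}(o)\le c_0$ by comparison with a fixed test body. On the other hand, inserting the John ellipsoid $E_j$ of $P_j$ and using a partition-of-$\sn$ argument (in the spirit of \cite{BLYZ12,BLYZ19}) together with a quantitative version of \eqref{SB} for $\mu_j$ yields a lower bound $\Phi_{P_j,\mu_j}(o)\ge t_0\log a_{n,j}+O(1)$, where $a_{n,j}$ is the longest semi-axis of $E_j$. If $P_j$ were unbounded, $a_{n,j}\to\infty$ would force $\Phi_{P_j,\mu_j}(o)\to\infty$, contradicting the upper bound.

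So the missing idea is precisely the variational characterization of $P_j$: without it you have no handle on $\Phi_{P_j}$ from above, and your proposed concentration route would require an independent (and, for torsion, unproven) structural result about $G^{tor}$ under degeneration.
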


We remark that the uniqueness of the torsion log-Minkowski problem is challenging. The key is to discover a logarithmic Brunn-Minkowski inequality for torsional rigidity, which is resemble to the classical logarithmic Brunn-Minkowski inequality first proved by B\"{o}r\"{o}czky-Lutwak-Yang-Zhang \cite{BLZ12} provided that two plane convex bodies are origin-symmetric. Very recently, Crasta-Fragal\`a \cite{CF23} gave some partial answers on this topic, they illustrated that if such a measure is absolutely continuous with constant density, the underlying body is a ball.

This paper is organized as follows. In Sec. \ref{Sec2} , we collect some facts about convex bodies, torsional rigidity and torsion measure. In Sec. \ref{Sec3},  the associated extremal problem is introduced. In Sec. \ref{Sec4}, we solve the discrete torsion log-Minkowski problem. In Sec. \ref{Sec5}, the general torsion log-Minkowski problem is studied by an approximation technique.

\section{Backgrounds}
\label{Sec2}
In this section, we list some basic facts about convex bodies and torsional rigidity that we shall use in what follows.
\subsection{Convex bodies}
There are a great quantity of standard and good references regarding convex bodies, for example, refer to Gardner \cite{G06} and Schneider \cite{S14}.

Denote by ${\rnnn}$ the $n$-dimensional Euclidean space, by $o$ the origin of ${\rnnn}$. $\omega_{n}$ is the $n$-dimensional volume of the unit ball $B^{n}$ in $\rnnn$. We will also use the notation $|\mu|$ for the total mass of a measure $\mu$. Let $G_{n,k}$ denote the Grass-mannian of $k$-dimensional subspaces of $\rnnn$. For $x,y\in {\rnnn}$, $x\cdot y$ denotes the standard inner product.  For $x\in{\rnnn}$, denote by $|x|=\sqrt{x\cdot x}$ the Euclidean norm. The origin-centered ball $B$ is denoted by $\{x\in {\rnnn}:|x|\leq 1\}$, its boundary by ${\sn}$.  Denoted by $C({\sn})$ the set of continuous functions defined on the unit sphere ${\sn}$, by $C^{+}({\sn})$ the set of strictly positive functions in $C({\sn})$.  A compact convex set of ${\rnnn}$ with non-empty interior is called by a convex body. The set of all convex bodies in $\rnnn$ is denoted by $\koo$. The set of all convex bodies containing the origin in the interior is denoted by $\ko$, and the set of all origin-symmetric convex bodies by $\ke$.

If $\Omega$ is a compact convex set in ${\rnnn}$, for $x\in{\rnnn}$, the support function of $\Omega$ with respect to $o$ is defined by
\[
h(\Omega,x)=\max\{x\cdot y:y \in \Omega\}.
\]

For compact convex sets $\Omega$ and $L$ in ${\rnnn}$, any real $a_{1},a_{2}\geq 0$, define the Minkowski combination of $a_{1}\Omega+a_{2}L$ in ${\rnnn}$ by
\[
a_{1}\Omega+a_{2}L=\{a_{1}x+a_{2}y:x\in \Omega,\ y\in L\},
\]
and its support function is given by
\[
h({a_{1}\Omega+a_{2}L},\cdot)=a_{1}h(\Omega,\cdot)+a_{2}h(L,\cdot).
\]

For any compact convex set $\Omega$ in ${\rnnn}$, the radial function $\rho(\Omega,u)$ of $\Omega$ with respect to $o$ is expressed as
\[
\rho(\Omega,u)=\max\{\lambda:\lambda u\in \Omega\},\ \forall u\in  {\rnnn} \backslash\{o\}.
\]

The mag $g_{\Omega}:\partial \Omega\rightarrow \sn$ denotes the Gauss map of $\partial \Omega$.

 The Hausdorff metric $\mathcal{D}(\Omega,L)$ between two compact convex sets $\Omega$ and $L$ in ${\rnnn}$, is expressed as
\[
\mathcal{D}(\Omega,L)=\max\{|h(\Omega,v)-h(L,v)|:v\in {\sn}\}.
\]
Let $\Omega_{j}$ be a sequence of compact convex set in ${\rnnn}$, for a compact convex set $\Omega_{0}$ in ${\rnnn}$, if $\mathcal{D}(\Omega_{j},\Omega_{0})\rightarrow 0$, then $\Omega_{j}$ converges to $\Omega_{0}$.

For each $h\in C^{+}(\sn)$, the \emph{Wulff shape} generated by $h$, denoted by $[h]$, is the convex body defined by
\[
[h]=\{x\in \rnnn: x \cdot v\leq h(v), {\rm for \ all} \ v\in \sn\}.
\]

For a compact convex set $\Omega$ in ${\rnnn}$, the diameter of $\Omega$ is denoted by
\[
diam(\Omega)=\max\{|x-y|:x,y \in \Omega\}.
\]

For any convex body $\Omega$ in ${\rnnn}$ and $v\in {\sn}$, the support hyperplane $H(\Omega,v)$ in direction $v$ is defined by
\[
H(\Omega,v)=\{x\in {\rnnn}:x\cdot v=h(\Omega,v)\},
\]
the half-space $H^{-}(\Omega,v)$ in the direction $v$ is defined by
\[
H^{-}(\Omega,v)=\{x\in {\rnnn}:x\cdot v\leq h(\Omega,v)\},
\]
and the support set $F(\Omega,v)$ in the direction $v$ is defined by
\[
F(\Omega,v)=\Omega\cap H(\Omega,v).
\]

Denote by $\mathcal{P}$ the set of polytopes in ${\rnnn}$, suppose that the unit vectors $v_{1},\ldots,v_{N}$ $(N\geq n+1)$ are not concentrated on any closed hemisphere of ${\sn}$, by $\mathcal{P}(v_{1},\ldots,v_{N})$ the set with $P\in \mathcal{P}(v_{1},\ldots,v_{N})$ satisfying
\[
P=\bigcap_{k=1}^{N}H^{-}(P,v_{k}).
\]
It is easy to see that, for $P\in \mathcal{P}(v_{1},\ldots,v_{N})$, then $P$ has at most $N$ facets and the outer normals of $P$ are a subset of $\{v_{1},\ldots,v_{N}\}$.

A special collection of polytopes are those facet normals are in {\emph general position}. We say $v_{1},\ldots, v_{N}$ are in general position in dimension $n$ if for any $n$-tuple $1\leq i_{1}< i_{2}< \ldots<i_{n}\leq N$, the vectors $v_{i_{1}},\ldots,v_{i_{n}}$ are linearly independent.

\subsection{Torsional rigidity and torsion measure}

We first list some properties corresponding to the solution $u$ of \eqref{torlapu}, as follows.

\begin{lem}\cite{K85,CF10}\label{CE}
Let $\Omega$ be a compact convex set of $\rnnn$ and let $u$ be the solution of \eqref{torlapu} in $\Omega$. Then $\sqrt{u}$ is a concave function in $\Omega$.
\end{lem}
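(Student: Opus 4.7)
My plan is to prove concavity of $v := \sqrt{u}$ using the concavity maximum principle of Korevaar, in the form developed by Kennington for power transformations of torsion-type equations. A standard approximation argument reduces to the smooth, strictly convex case: if $\Omega_j$ is a sequence of smooth strictly convex bodies with $\Omega_j \to \Omega$ in the Hausdorff metric and $u_j$ the corresponding torsion functions, stability of the Dirichlet problem $\Delta u_j = -2$, $u_j|_{\partial\Omega_j}=0$ gives $u_j \to u$ locally uniformly on the interior of $\Omega$, and pointwise concavity of $\sqrt{u_j}$ transfers to the limit. Henceforth assume $\partial\Omega \in C^\infty$ and strictly convex, so $u \in C^\infty(\overline{\Omega})$ and $|\nabla u| \geq c_0 > 0$ on $\partial\Omega$ by Hopf's boundary-point lemma; consequently $v(x) \asymp \sqrt{\mathrm{dist}(x,\partial\Omega)}$ near $\partial\Omega$.

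A direct computation from $\Delta u = -2$ and $u = v^2$ yields the nonlinear equation
\[
v\,\Delta v + |\nabla v|^2 = -1 \quad \text{in }\Omega, \qquad v = 0 \text{ on }\partial\Omega.
\]
Define the concavity deficit
\[
C(x,y,\lambda) := v\bigl(\lambda x + (1-\lambda)y\bigr) - \lambda v(x) - (1-\lambda)v(y), \qquad (x,y,\lambda) \in \Omega\times\Omega\times(0,1),
\]
so that $v$ is concave iff $C \geq 0$. I argue by contradiction, assuming $\inf C < 0$. The square-root boundary asymptotics of $v$, together with the concavity of $\sqrt{\mathrm{dist}(\cdot,\partial\Omega)}$ on a convex domain (composition of the concave distance function with the concave increasing square root), imply $C \geq 0$ in a one-sided neighborhood of $\partial\Omega$, so the infimum is attained at some interior $(x_0,y_0,\lambda_0)$ with $x_0 \neq y_0$.

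At this interior minimum, first-order optimality forces $\nabla v(x_0) = \nabla v(y_0) = \nabla v(z_0)$, where $z_0 := \lambda_0 x_0 + (1-\lambda_0) y_0$; hence the $|\nabla v|^2$ terms cancel when the PDE is evaluated at $x_0, y_0, z_0$. Contracting the second-order condition on the Hessian of $C$ with the identity matrix and substituting $v\,\Delta v = -1 - |\nabla v|^2$ produces an algebraic relation on $v(x_0), v(y_0), v(z_0)$ that is incompatible with $C(x_0,y_0,\lambda_0) < 0$. This is Kennington's identity for the exponent $\alpha = 1/2$, which is distinguished by the \emph{linear} dependence of the coefficient $v\Delta v$ on $v$ in the nonlinear equation above.

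The main obstacle is ruling out a negative infimum at the boundary, because $v$ vanishes only to order $1/2$ there. This is where strict convexity of $\partial\Omega$ and the Hopf lower bound on $|\nabla u|$ are essential; both are secured by the initial approximation step. The interior algebraic step, once the boundary obstruction is cleared, is a direct application of the concavity maximum principle with the correct power transformation.
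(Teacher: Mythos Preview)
The paper does not prove this lemma; it is quoted from Kennington \cite{K85} and Colesanti--Fimiani \cite{CF10} as a known result. Your outline is precisely the Kennington--Korevaar concavity maximum principle specialised to $\alpha=1/2$, so the strategy is correct and standard. Two steps in your sketch, however, are not quite as written and would need to be sharpened.

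\textbf{Boundary step.} The assertion that ``$C\geq 0$ in a one-sided neighbourhood of $\partial\Omega$'' does not follow from the asymptotic $v\asymp\sqrt{d}$ together with concavity of $\sqrt{d}$: asymptotic equivalence does not transfer concavity inequalities. What actually excludes a boundary minimum is the fact that the inward normal derivative of $v$ is $+\infty$ on $\partial\Omega$ (since $u$ vanishes to first order by Hopf, $v=\sqrt{u}$ vanishes to order $1/2$). If a negative minimum of $C$ were attained at $(x_0,y_0,\lambda_0)$ with $x_0\in\partial\Omega$ and $y_0\in\Omega$, then pushing $x_0$ inward along the normal $\nu$ gives $\frac{d}{dt}C(x_0+t\nu,y_0,\lambda_0)\big|_{t=0^+}=\lambda_0\nabla v(z_0)\cdot\nu-\lambda_0\,\partial_\nu v(x_0)=-\infty$, contradicting minimality. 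This is Korevaar's boundary argument; your heuristic should be replaced by it.

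\textbf{Interior step.} ``Contracting the second-order condition with the identity matrix'' (i.e.\ taking traces with equal test vectors) yields only
\[
\frac{1}{v(z_0)}\leq\frac{\lambda_0}{v(x_0)}+\frac{1-\lambda_0}{v(y_0)},
\]
which is a harmonic-mean bound and does \emph{not} force $C\geq 0$. Kennington's actual computation contracts the $2n\times 2n$ second-order condition against the anisotropic directions $(\alpha e_i,\beta e_i)$ with $\alpha=v(x_0)$, $\beta=v(y_0)$; after substituting $v\Delta v=-1-|\nabla v|^2$ and using $\nabla v(x_0)=\nabla v(y_0)=\nabla v(z_0)$ this gives
\[
\frac{[\lambda_0 v(x_0)+(1-\lambda_0)v(y_0)]^2}{v(z_0)}\leq \lambda_0 v(x_0)+(1-\lambda_0)v(y_0),
\]
hence $v(z_0)\geq\lambda_0 v(x_0)+(1-\lambda_0)v(y_0)$, the desired contradiction. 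Your reference to ``Kennington's identity for $\alpha=1/2$'' suggests you have this in mind, but the phrase ``contracting with the identity'' misdescribes the step.

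With these two corrections your argument is complete and is exactly the proof in \cite{K85}.
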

Let $M_{\Omega}=\max_{\overline{\Omega}}u$, for every $t\in [0,M_{\Omega}]$, we define
\[
\Omega_{t}=\{x\in \Omega|\ u(x)>t\}.
\]
By lemma \ref{CE}, we know that $\Omega_{t}$ is convex for every $t$.

\begin{lem}\cite{CF10}\label{argu}
Let $\Omega$  be a compact convex set in $\rnnn$ and let $u$ be the solution of ~\eqref{torlapu} in $\Omega$, then
\begin{equation}\label{grdguji}
|\nabla u(x)|\leq diam(\Omega),\ \forall x\in  \Omega.
\end{equation}
\end{lem}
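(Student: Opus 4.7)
The target statement is the gradient bound
$|\nabla u(x)|\le \DIAM(\Omega)$ for the torsion function $u$. My plan rests on two classical tools: the gradient maximum principle for $\Delta u=-2$, and a linear-quadratic barrier built from a supporting hyperplane of $\Omega$.

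Step 1 (subharmonicity of $|\nabla u|^{2}$). Since $\Delta u=-2$ is constant, each partial derivative $\partial_i u$ is harmonic. A direct computation then gives
\begin{equation*}
\Delta |\nabla u|^{2}=2\sum_{i}|\nabla \partial_i u|^{2}=2|D^{2}u|^{2}\ge 0.
\end{equation*}
By the weak maximum principle, $\max_{\overline{\Omega}}|\nabla u|^{2}$ is attained on $\partial\Omega$. (Interior regularity of $u$ is standard, and I will first carry out the argument under a smooth boundary assumption, then approximate.)

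Step 2 (barrier at a boundary maximum point). Assume first that $\partial\Omega$ is $C^{2}$. Fix any $x_{0}\in\partial\Omega$; since $u\equiv 0$ on $\partial\Omega$ with $u>0$ in $\Omega$, we have $\nabla u(x_{0})=-|\nabla u(x_{0})|\,\nu$, where $\nu=g_{\Omega}(x_{0})$ is the outer unit normal. Set $D=\DIAM(\Omega)$ and define the candidate barrier
\begin{equation*}
w(y)=-(y-x_{0})\cdot\nu\,\bigl[D+(y-x_{0})\cdot\nu\bigr].
\end{equation*}
One checks $\Delta w=-2$ and $w(x_{0})=0$. Convexity of $\Omega$ together with $H(\Omega,\nu)$ being a supporting hyperplane at $x_{0}$ gives $s:=-(y-x_{0})\cdot\nu\ge 0$ for all $y\in\Omega$, and $s\le |y-x_{0}|\le D$. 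Hence $w(y)=s(D-s)\ge 0$ on $\partial\Omega$.

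Step 3 (comparison and Hopf-type estimate). Because $\Delta(w-u)=0$ in $\Omega$ and $w-u\ge 0$ on $\partial\Omega$, the maximum principle yields $w\ge u$ throughout $\Omega$. Since $w(x_{0})=u(x_{0})=0$, taking the inner normal derivative at $x_{0}$ gives
\begin{equation*}
|\nabla u(x_{0})|=\partial_{-\nu}u(x_{0})\le \partial_{-\nu}w(x_{0})=D.
\end{equation*}
Combined with Step 1, this delivers $|\nabla u(x)|\le D$ for every $x\in\Omega$.

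Step 4 (removing smoothness). For general $\Omega\in\koo$, approximate $\Omega$ in the Hausdorff metric by smooth convex bodies $\Omega_{j}$ with $\DIAM(\Omega_{j})\to \DIAM(\Omega)$. Standard stability of the torsion function (the $u_{j}$ converge locally uniformly with their derivatives on compact subsets of $\INT\Omega$) passes the bound from Step 3 to the limit. The only delicate point I anticipate is the boundary-point analysis in Step 2: one must ensure that the Hopf comparison is legitimate at $x_{0}$, which is why the smoothness reduction in Step 4 is needed before taking limits. Once the smooth case is secured, the remaining work is routine.
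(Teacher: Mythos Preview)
Your argument is correct. Note, however, that the present paper does not give its own proof of this lemma; it is simply quoted from Colesanti--Fimiani, so there is no ``paper's proof'' to compare against beyond the cited source. Your barrier $w(y)=s(D-s)$ with $s=-(y-x_{0})\cdot\nu$ is precisely the torsion function of the slab between the supporting hyperplane $H(\Omega,\nu)$ and its parallel at distance $D=\DIAM(\Omega)$, and comparing $u$ with this slab solution is the standard route taken in the cited reference as well. The reduction to the boundary via subharmonicity of $|\nabla u|^{2}$ (Step~1), the Hopf comparison at a boundary point (Steps~2--3), and the smooth approximation (Step~4) are all sound; the only cosmetic remark is that Step~1 could be replaced by applying the barrier at the boundary point where $|\nabla u|$ attains its maximum, but your formulation is cleaner.
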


For any $ x\in\partial \Omega$, $0<b<1$, the non-tangential cone is defined as
\begin{equation*}\label{capm2}
\Gamma(x)=\left\{y\in \Omega:dist(y,\partial \Omega)> b|x-y|\right\}.
\end{equation*}

\begin{lem}\cite{CF10,D77}\label{Nonfin}
Let $\Omega$ be a compact convex set in ${\rnnn}$ and let $u$ be the solution of \eqref{torlapu} in $\Omega$. Then the non-tangential limit
\begin{equation*}\label{capm3}
\nabla u(x)=\lim_{y\rightarrow x, \ y \in \Gamma(x)}\nabla u(y),
\end{equation*}
exists for $\mathcal{H}^{n-1}$ almost all $x\in\partial \Omega$. Furthermore, for $\mathcal{H}^{n-1}$ almost all $x\in \partial \Omega$,
\begin{equation*}
\nabla u(x)=-|\nabla u(x)|g_{\Omega}(x)\quad{\rm and}\quad |\nabla u|\in L^{2}(\partial \Omega,  \mathcal{H}^{n-1}).
\end{equation*}
\end{lem}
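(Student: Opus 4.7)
The plan is to reduce the statement to Dahlberg's theorem on non-tangential behavior of gradients of harmonic functions in Lipschitz domains, and then combine it with the interior gradient bound already proved in Lemma \ref{argu}. A convex body $\Omega$ has boundary that is locally the graph of a convex (hence Lipschitz) function, so $\Omega$ is a Lipschitz domain and Dahlberg's theory \cite{D77} is available. To make the theory applicable to $u$, one first writes $u = v + w$ where $w(x) = -|x|^{2}/n$ is a smooth particular solution of $\Delta w = -2$. Then $v := u - w$ is harmonic in $\Omega$ with polynomial boundary data $v|_{\partial\Omega} = |x|^{2}/n$, which trivially lies in every relevant Sobolev class on $\partial\Omega$.

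Next, I would apply Dahlberg's theorem to the harmonic function $v$: the non-tangential limit of $\nabla v$ exists at $\mathcal{H}^{n-1}$-almost every boundary point, and the non-tangential maximal function of $|\nabla v|$ belongs to $L^{2}(\partial\Omega, \mathcal{H}^{n-1})$. Since $\nabla w(x) = -2x/n$ is smooth up to the boundary, these properties transfer to $\nabla u = \nabla v + \nabla w$. In fact, the $L^{2}$ conclusion for $\nabla u$ also follows directly from the interior bound $|\nabla u| \le \DIAM(\Omega)$ of Lemma \ref{argu} combined with the finiteness of $\mathcal{H}^{n-1}(\partial\Omega)$ for a convex body, once existence of the limit is in hand.

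For the orientation formula, I would fix a boundary point $x_{0}$ at which (a) $\partial\Omega$ has a unique outer unit normal $g_{\Omega}(x_{0})$ and (b) the non-tangential limit of $\nabla u$ exists. Both hold $\mathcal{H}^{n-1}$-a.e.: property (a) by Rademacher's theorem applied to the Lipschitz boundary chart, property (b) by the step above. Since $u \equiv 0$ on $\partial\Omega$, the tangential derivatives of $u$ vanish at $x_{0}$, so the limiting $\nabla u(x_{0})$ is parallel to $g_{\Omega}(x_{0})$. The strong maximum principle applied to $-\Delta u = 2 > 0$ yields $u > 0$ in the interior and $u = 0$ on $\partial\Omega$, forcing $\nabla u(x_{0})$ to point strictly into $\Omega$. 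Combined with the parallelism, this gives $\nabla u(x_{0}) = -|\nabla u(x_{0})|\, g_{\Omega}(x_{0})$.

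The only deep step is Dahlberg's theorem, which I would simply cite. The main obstacle, and essentially the only place care is required, is verifying that Dahlberg's hypotheses genuinely apply: that convex bodies qualify as Lipschitz domains (clear from the representation of $\partial\Omega$ as graphs of convex functions) and that the boundary data $|x|^{2}/n$ has sufficient regularity on $\partial\Omega$ (immediate, as it is the restriction of a global smooth function). Everything else is a routine decomposition argument and an application of the maximum principle, together with Rademacher-type a.e. existence of tangent hyperplanes.
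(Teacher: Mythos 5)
The paper does not prove this lemma itself; it is stated as imported from Colesanti--Fimiani \cite{CF10} and Dahlberg \cite{D77}, so the comparison here is with the argument in \cite{CF10} rather than with a proof in this manuscript. Your reduction is the natural one and is, as far as I can tell, correct: writing $u = v + w$ with $w(x) = -|x|^{2}/n$ (indeed $\Delta w = -\tfrac{1}{n}\cdot 2n = -2$) turns the torsion function into a harmonic function $v$ with smooth, hence Lipschitz and certainly $W^{1,2}(\partial\Omega)$, boundary data, so all the boundary-regularity machinery for harmonic functions on Lipschitz domains applies and pushes forward to $u$ because $\nabla w$ is globally smooth; the $L^{2}$ claim is, as you say, immediate from the uniform bound $|\nabla u|\le\DIAM(\Omega)$ of Lemma \ref{argu} together with $\mathcal{H}^{n-1}(\partial\Omega)<\infty$; and the maximum principle gives the sign of the normal component. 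This is almost certainly close in spirit to \cite{CF10}, who state that they ``extend Dahlberg's estimates for harmonic functions'' to $u$; whether they do so via your explicit decomposition or by rerunning the harmonic-measure estimates on $u$ directly, the substance is the same.

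Two small points worth flagging, though neither is a genuine gap. First, the precise statement you use for $v$ --- that boundary data in $W^{1,2}(\partial\Omega)$ forces the non-tangential maximal function of $\nabla v$ into $L^{2}(\partial\Omega)$ and gives a.e.\ non-tangential limits of $\nabla v$ --- is the solution of the $L^{2}$ regularity problem on Lipschitz domains, usually attributed to Jerison--Kenig rather than to Dahlberg's 1977 harmonic-measure paper alone; the present paper inherits this slight citation compression from \cite{CF10}, so you are consistent with the source, but it is worth being aware that \cite{D77} by itself is the Dirichlet-problem/harmonic-measure estimate, not the gradient estimate. Second, in the orientation argument the sentence ``since $u\equiv 0$ on $\partial\Omega$, the tangential derivatives of $u$ vanish at $x_{0}$'' silently uses the fact that the tangential component of the non-tangential limit $\nabla u(x_{0})$ agrees a.e.\ with the tangential gradient of the boundary trace $u|_{\partial\Omega}\equiv 0$; that identification is exactly part of the regularity-problem theory you are already invoking (it is not a pointwise consequence of $u$ vanishing on $\partial\Omega$ alone), so it is fine to use but should be attributed to the same source rather than presented as obvious. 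With those attributions tightened, the argument is complete.
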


Let $\{\Omega_{i}\}_{i=0}^{\infty}$ be a sequence of convex bodies in ${\rnnn}$, on the one hand, with respect to torsional rigidity, there are the following properties.
\begin{lem}\cite{CF10}\label{T1}
(i) It is positively homogenous of order $n+2$, i.e.,
\begin{equation*}\label{torhom}
T(m\Omega_{0})=m^{n+2}T(\Omega_{0}),\ m> 0.
\end{equation*}

(ii) It is translation invariant. That is
\begin{equation*}\label{tranin}
T(\Omega_{0} +x_{0})=T(\Omega_{0}),\ \forall x_{0}\in {\rnnn}.
\end{equation*}

(iii) If $\Omega_{i}$ converges to $\Omega_{0}$ in the Hausdorff metric as $i\rightarrow \infty$ (i.e., $\mathcal{D}(\Omega_{i},\Omega_{0})\rightarrow 0$ as $i\rightarrow \infty$), then
\begin{equation*}\label{Thousdo}
\lim_{i\rightarrow \infty}T(\Omega_{i})=T(\Omega_{0}).
\end{equation*}

(iv) It is monotone increasing, i.e., $T(L)\leq T(\Omega)$ if $L\subset \Omega$.
\end{lem}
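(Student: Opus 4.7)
The plan is to establish (i), (ii), and (iv) by direct arguments from the definition of the torsion function and from the Dirichlet variational principle, and then to deduce (iii) from (i) and (iv) by sandwiching $\Omega_i$ between two dilates of $\Omega_0$.

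For (i), I would start from the torsion function $u$ of $\Omega_0$ and verify by a direct computation that $v(x) := m^{2}u(x/m)$ on $m\Omega_0$ satisfies $\Delta v = -2$ with zero boundary values, so $v$ is the torsion function of $m\Omega_0$. The identity $T(m\Omega_0) = 2\int_{m\Omega_0} v\,dx$ together with the substitution $x = my$ then yields $T(m\Omega_0) = m^{n+2}T(\Omega_0)$. For (ii), the analogous observation that $u(x - x_0)$ is the torsion function of $\Omega_0 + x_0$ reduces translation invariance to translation invariance of Lebesgue measure on $\rnnn$. For (iv), I would argue directly from the variational definition stated in the introduction: any admissible $w\in W^{1,2}_{0}(L)$ extends by zero to an admissible $\widetilde{w}\in W^{1,2}_{0}(\Omega)$ with identical gradient energy and identical $L^{1}$-norm, so the infimum defining $1/T(\Omega)$ is taken over a superset of that defining $1/T(L)$; hence $1/T(\Omega)\le 1/T(L)$, i.e.\ $T(L)\le T(\Omega)$.

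For (iii), the key idea is to sandwich $\Omega_i$ between scaled copies of $\Omega_0$ using the properties already proved. By translation invariance (ii) I may assume $rB\subset \Omega_0$ for some $r>0$. Writing $\epsilon_i := \mathcal{D}(\Omega_i,\Omega_0)\to 0$, I have both $\Omega_i\subset \Omega_0 + \epsilon_i B$ and $\Omega_0\subset \Omega_i + \epsilon_i B$. Since $rB\subset \Omega_0$, the first inclusion gives $\Omega_i\subset \Omega_0 + (\epsilon_i/r)\Omega_0 = (1+\epsilon_i/r)\Omega_0$. For the reverse direction, I would observe via support functions that $h(\Omega_i,v)\ge h(\Omega_0,v) - \epsilon_i \ge r-\epsilon_i > r/2$ for $v\in\sn$ and $i$ large, so $(r/2)B\subset \Omega_i$; consequently $\Omega_0\subset \Omega_i + \epsilon_i B\subset (1+2\epsilon_i/r)\Omega_i$. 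Combining these two inclusions with (iv) and (i) yields
\[
(1+2\epsilon_i/r)^{-(n+2)}T(\Omega_0)\le T(\Omega_i) \le (1+\epsilon_i/r)^{n+2}T(\Omega_0),
\]
and letting $i\to\infty$ completes the proof.

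The main obstacle is part (iii): Hausdorff convergence of domains does not automatically produce convergence of the torsion functions in any strong sense, and handling boundary regularity for arbitrary convex bodies through PDE estimates would be delicate. The sandwich argument sidesteps these difficulties by replacing any PDE analysis with elementary inclusion relations, but it relies crucially on $\Omega_0$ being a genuine convex body, so that a positive inradius is available and propagates to $\Omega_i$ for large $i$; without a full-dimensional limit, the double bound above would degenerate.
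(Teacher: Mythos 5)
Your proof is correct. Note that the paper itself does not prove this lemma; it is cited verbatim from Colesanti--Fimiani \cite{CF10}, so there is no internal argument to compare against. Your derivations of (i), (ii), and (iv) from the scaling/translation behaviour of the torsion function and the variational characterisation are exactly the standard ones. The sandwich argument for (iii) is a clean and self-contained route: once (i) and (iv) are in hand, the double inclusion
\[
(1+2\epsilon_i/r)^{-1}\Omega_0 \subset \Omega_i \subset (1+\epsilon_i/r)\Omega_0
\]
together with monotonicity and homogeneity immediately squeezes $T(\Omega_i)$ to $T(\Omega_0)$, with no need for quantitative PDE estimates or $\Gamma$-convergence machinery. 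You are right to flag that the argument requires $\Omega_0$ to be a genuine convex body (positive inradius) and to observe that this positivity propagates to $\Omega_i$ for large $i$ via support functions; that is precisely the point where the sandwich could otherwise degenerate, and it is well covered by the hypotheses of the lemma.
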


Recall that torsional rigidity satisfies the following isoperimetric-type inequality:
\begin{lem}\cite[de Saint Venant inequality]{PS51} \label{JKT} Let $\Omega$ be a convex body in $\rnnn$, then de Saint Venant inequality is
\begin{equation}\label{IS2}
\left(\frac{T(\Omega)}{T(B^{n})} \right)^{\frac{1}{n+2}}\leq \left( \frac{|\Omega|}{|B^{n}|} \right)^{\frac{1}{n}}.
\end{equation}
More precisely, since $T(B^{n})=\frac{\omega_{n}}{n(n+2)}$, \eqref{IS2} becomes
\begin{equation}\label{IS}
T(\Omega)\leq \frac{|\Omega|^{\frac{n+2}{n}}}{n(n+2)\omega^\frac{2}{n}_{n}},
\end{equation}
with equality if and only if $\Omega$ is a ball.

\end{lem}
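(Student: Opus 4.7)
The plan is to derive the inequality from the variational characterization of torsional rigidity combined with Schwarz (spherical decreasing) rearrangement. First I would rewrite the defining infimum in the dual form
\[
T(\Omega) = \sup\left\{\frac{\left(\int_{\Omega} u\, dx\right)^{2}}{\int_{\Omega}|\nabla u|^{2}\,dx}: u\in W^{1,2}_{0}(\Omega),\ u\geq 0,\ u\not\equiv 0\right\},
\]
which is legitimate because $|\nabla |u||\leq |\nabla u|$ a.e.\ shows that passing from $u$ to $|u|$ never worsens the ratio. Let $\Omega^{*}=B_{r}$ denote the Euclidean ball centred at $o$ with $|B_{r}|=|\Omega|$, so $r=(|\Omega|/\omega_{n})^{1/n}$.

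For any admissible $u$, let $u^{*}\in W^{1,2}_{0}(\Omega^{*})$ denote its Schwarz rearrangement. Two classical facts from rearrangement theory then apply: equimeasurability gives $\int_{\Omega^{*}} u^{*}\,dx=\int_{\Omega} u\,dx$, and the P\'olya--Szeg\H{o} inequality gives $\int_{\Omega^{*}}|\nabla u^{*}|^{2}\,dx \leq \int_{\Omega}|\nabla u|^{2}\,dx$. Combined, the trial ratio is non-decreasing under symmetrization, so $T(\Omega)\leq T(\Omega^{*})$. The scaling property in Lemma \ref{T1}(i) yields $T(B_{r})=r^{n+2}T(B^{n})$, and substituting $r=(|\Omega|/\omega_{n})^{1/n}$ produces
\[
T(\Omega) \leq T(B^{n})\left(\frac{|\Omega|}{\omega_{n}}\right)^{\frac{n+2}{n}},
\]
which is exactly \eqref{IS2}; plugging in $T(B^{n})=\omega_{n}/(n(n+2))$ gives \eqref{IS}.

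For the equality case, if $T(\Omega)=T(\Omega^{*})$, then the torsion function $u$ of $\Omega$ must saturate P\'olya--Szeg\H{o}. By the Brothers--Ziemer rigidity theorem, this forces $u$ to coincide, up to translation, with a radially symmetric strictly decreasing function whose superlevel sets are concentric balls; since $\{u>0\}$ equals the interior of $\Omega$ by the strong maximum principle applied to $\Delta u=-2$, $\Omega$ itself must be a ball.

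The main obstacle is the rigidity step, where one must verify that the critical set $\{|\nabla u|=0\}\cap\{0<u<\max u\}$ has Lebesgue measure zero in order to invoke Brothers--Ziemer. This follows from the real analyticity of $u$ on the interior of $\Omega$ (as $u$ solves a linear elliptic PDE with constant coefficients): $|\nabla u|$ cannot vanish on a set of positive measure unless $u$ is locally constant on a component of a level set, which is incompatible with $\Delta u=-2$. Granting this, the P\'olya--Szeg\H{o} equality characterization applies directly and the proof is complete.
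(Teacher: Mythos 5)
The paper does not prove this lemma; it states the de Saint Venant inequality as a classical fact and cites P\'olya and Szeg\H o \cite{PS51}. Your proposal supplies the standard modern symmetrization proof, and the main chain of inequalities is correct: rewriting the variational definition as a supremum and restricting to $u\geq 0$ is legitimate, equimeasurability preserves $\int u$, the P\'olya--Szeg\H o inequality gives $\int|\nabla u^*|^2\leq\int|\nabla u|^2$, and hence $T(\Omega)\leq T(\Omega^*)=T(B_r)=r^{n+2}T(B^n)$ with $r=(|\Omega|/\omega_n)^{1/n}$, which is exactly \eqref{IS2} and then \eqref{IS}.

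The only point that needs tightening is the rigidity step. The critical-set hypothesis in the Brothers--Ziemer theorem concerns the \emph{rearranged} function: one must check that $\{\nabla u^*=0\}\cap\{0<u^*<\max u\}$ is Lebesgue-null. Your real-analyticity argument instead shows that the critical set of $u$ itself is null, which is not literally the required hypothesis, and in general the two conditions are not interchangeable. In the present situation there is a cleaner fix: if equality holds in \eqref{IS2}, then $u^*$ attains the maximal Rayleigh quotient on $\Omega^*=B_r$, so $u^*$ is a positive multiple of the torsion function of $B_r$, which is explicitly $x\mapsto (r^2-|x|^2)/n$; its gradient vanishes only at the center, so the Brothers--Ziemer hypothesis is satisfied trivially, and one concludes that $u$ is a translate of $u^*$ and hence that $\Omega$ is a ball. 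With that substitution the proof is complete and correct.
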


As showed in \cite[Theorem 4.1]{CF10}, the torsion measure stems from the differential of torsional rigidity.
\begin{lem}\label{argu2}Let $\Omega$, $\Omega_{1}$ be convex bodies in ${\rnnn}$ and let $h(\Omega_{1},\cdot) $ be the support function of $\Omega_{1}$. For sufficiently small $|t|> 0$, while $\Omega_{t}$ is the Wulff shape of $h_{t}$ defined by
\[
h_{t}(v)=h(\Omega,v)+th(\Omega_{1},v)+o(t,v),\quad v\in \sn,
\]
where $o(t,\cdot)/t\rightarrow 0$ uniformly on $\sn$ as $t\rightarrow 0$, then
\begin{equation}\label{Thadama66}
\frac{d}{dt}T(\Omega_{t})\big|_{t=0}=\int_{{\sn}}h(\Omega_{1},v){d}\mu^{tor}(\Omega,v).
\end{equation}
\end{lem}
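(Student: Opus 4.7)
The plan is to deduce this from the Minkowski-sum Hadamard formula \eqref{torhadma} by a sandwich argument: one replaces the Wulff shape $\Omega_t = [h_t]$ with two explicit convex bodies that agree with $\Omega + t\Omega_1$ up to an $o(t)$ Hausdorff correction, and then uses monotonicity plus \eqref{torhadma} itself to control the error.

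Set $\eta(t) := \sup_{v\in\sn}|o(t,v)|/|t|$, so $\eta(t)\to 0$ as $t\to 0$. Uniformly in $v\in\sn$,
\begin{equation*}
h(\Omega,v) + th(\Omega_1,v) - |t|\eta(t) \;\le\; h_t(v) \;\le\; h(\Omega,v) + th(\Omega_1,v) + |t|\eta(t).
\end{equation*}
Since $h(B^n,\cdot)\equiv 1$, for $t>0$ small the right-hand side is the support function of $\Omega+t\Omega_1+t\eta(t)B^n$, while the left-hand side defines the Wulff shape $(\Omega+t\Omega_1)\ominus t\eta(t)B^n$ (well defined because $\Omega$ has positive inradius). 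Inclusion-monotonicity of Wulff shapes combined with Lemma \ref{T1}(iv) gives the sandwich
\begin{equation*}
T\bigl((\Omega+t\Omega_1)\ominus t\eta(t)B^n\bigr) \;\le\; T(\Omega_t) \;\le\; T\bigl(\Omega+t\Omega_1+t\eta(t)B^n\bigr).
\end{equation*}

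The derivative is then extracted by comparing each endpoint of the sandwich with $T(\Omega+t\Omega_1)$. For the upper endpoint, the split
\[
T(\Omega+t\Omega_1+t\eta(t)B^n) - T(\Omega) = \bigl[T(\Omega+t\Omega_1+t\eta(t)B^n) - T(\Omega+t\Omega_1)\bigr] + \bigl[T(\Omega+t\Omega_1) - T(\Omega)\bigr]
\]
yields, after dividing by $t$ and letting $t \to 0^+$: the second bracket tends to $\int_{\sn} h(\Omega_1,v)\,d\mu^{tor}(\Omega,v)$ by \eqref{torhadma}; the first bracket equals $\int_0^{t\eta(t)}\mu^{tor}(\Omega+t\Omega_1+sB^n,\sn)\,ds$ via \eqref{torhadma} applied with test body $B^n$ at each slice, and is $O(t\eta(t))=o(t)$ thanks to the crude uniform bound $\mu^{tor}(K,\sn)\le \DIAM(K)^2\mathcal{H}^{n-1}(\partial K)$ from Lemma \ref{argu}. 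The lower endpoint is dealt with by a Hausdorff-Lipschitz estimate for $T$: sandwiching $K_1$ and $K_2$ between dilates and applying \eqref{torhadma} with test body $B^n$ gives $|T(K_1)-T(K_2)|\le M\,\mathcal{D}(K_1,K_2)$ for $K_i$ near $\Omega$, where $M$ is the uniform total torsion mass. Since $\mathcal{D}(\Omega+t\Omega_1,(\Omega+t\Omega_1)\ominus t\eta(t)B^n) = O(t\eta(t))$ (a standard fact for convex bodies of positive inradius), the lower bracket is again $o(t)$. A symmetric treatment of $t<0$ produces matching one-sided derivatives.

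The main technical obstacle is obtaining the uniform control on $\mu^{tor}(K,\sn)$ and on the Hausdorff distance $\mathcal{D}(K,K\ominus rB^n)=O(r)$ as $K$ ranges over a small Hausdorff neighbourhood of $\Omega$; both rest on Lemma \ref{argu} combined with standard continuity of surface-area measures under Hausdorff convergence. Once these are in place the sandwich pinches $\frac{d}{dt}T(\Omega_t)|_{t=0}$ to $\int_{\sn} h(\Omega_1,v)\,d\mu^{tor}(\Omega,v)$ as required.
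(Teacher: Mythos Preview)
The paper does not supply its own proof of this lemma; it is attributed to \cite[Theorem~4.1]{CF10}. So there is no in-paper argument to compare against, and I comment only on the soundness of your reduction.

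Your sandwich is correct for the right-hand derivative $t\to 0^{+}$: there $h_\Omega+th_{\Omega_1}$ is an honest support function, the Wulff shape coincides with the Minkowski sum $\Omega+t\Omega_1$, and both endpoints of the sandwich are controlled by \eqref{torhadma} exactly as you describe.

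The sentence ``a symmetric treatment of $t<0$ produces matching one-sided derivatives'' conceals a genuine obstacle. For $t<0$ the function $h_\Omega+th_{\Omega_1}$ is in general \emph{not} a support function; its Wulff shape is the inner body $\Omega\ominus|t|\Omega_1$, and $(\Omega\ominus|t|\Omega_1)+|t|\Omega_1$ can differ from $\Omega$ by Hausdorff distance of exact order $|t|$, not $o(|t|)$ --- take $\Omega$ a square and $\Omega_1$ a disk. Running your sandwich for $t<0$ therefore yields only one of the two needed inequalities: from $(\Omega\ominus|t|\Omega_1)+|t|\Omega_1\subset\Omega$ together with \eqref{torhadma} you get
\[
\liminf_{t\to 0^{-}}\frac{T(\Omega_t)-T(\Omega)}{t}\ \ge\ \int_{\sn} h_{\Omega_1}\,d\mu^{tor}(\Omega,\cdot),
\]
but the matching $\limsup$ bound does not follow, because there is no inclusion of the form $\Omega\subset(\Omega\ominus|t|\Omega_1)+|t|\Omega_1+o(|t|)B^n$ available.

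The standard remedy is Aleksandrov's variational lemma for Wulff shapes (see \cite[\S7.5]{S14}): one has $h_{\Omega_t}(v)=h_\Omega(v)+t\,h_{\Omega_1}(v)+o(t)$ for $S(\Omega,\cdot)$-almost every $v$, with uniform domination. Since $\mu^{tor}(\Omega,\cdot)$ is absolutely continuous with respect to $S(\Omega,\cdot)$ with density bounded by $\DIAM(\Omega)^2$ (Lemma~\ref{meau1}(c) and Lemma~\ref{argu}), dominated convergence then delivers the two-sided derivative. Alternatively, observe that in the only place the paper uses the two-sided version (Theorem~\ref{maint2}) the base body is a polytope with exactly $N$ facets; the perturbed Wulff shape is then again a polytope with the same facet normals for small $|t|$, and the formula is elementary there.
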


On the other hand, with regard to torsion measure, there are also the following properties.

\begin{lem}\cite{CF10}\label{meau1}
(a) It is positively homogenous of order $n+1$, i.e.,
\begin{equation*}\label{Tmeahom}
\mu^{tor}(m\Omega_{0},\cdot)=m^{n+1}\mu^{tor}(\Omega_{0},\cdot),\ m> 0.
\end{equation*}

(b) It is translation invariant. That is
\begin{equation*}\label{Tmeatranin}
\mu^{tor}(\Omega_{0} +x_{0})=\mu^{tor}(\Omega_{0}),\ \forall x_{0}\in {\rnnn}.
\end{equation*}

(c) It is absolutely continuous with respect to the surface area measure.

(d) For any fixed $i\in\{0,1,\ldots\}$, if $\Omega_{i}$ converges to $\Omega_{0}$ in the Hausdorff metric as $i\rightarrow \infty$ (i.e., $\mathcal{D}(\Omega_{i},\Omega_{0})\rightarrow 0$ as $i\rightarrow \infty$), then the sequence of $\mu^{tor}(\Omega_{i},\cdot)$ converges weakly in the sense of measures to $\mu^{tor}(\Omega_{0},\cdot)$ as $i\rightarrow \infty$ .
\end{lem}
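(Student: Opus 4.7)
Properties (a), (b), (c) follow quite directly from the explicit representation \eqref{tormes2}. For \textbf{(a)}, the rescaled solution on $m\Omega_0$ is $u_{m\Omega_0}(x)=m^{2}u_{\Omega_0}(x/m)$, hence $|\nabla u_{m\Omega_0}|^{2}(x)=m^{2}|\nabla u_{\Omega_0}(x/m)|^{2}$ in the non-tangential sense of Lemma \ref{Nonfin}; together with the standard scaling $S(m\Omega_0,\cdot)=m^{n-1}S(\Omega_0,\cdot)$ of the surface area measure, this produces the factor $m^{n+1}$. For \textbf{(b)}, the translated solution $u_{\Omega_0+x_0}(x)=u_{\Omega_0}(x-x_0)$ has the same gradient up to translation, the Gauss map is translation-covariant, and $S(\Omega_0,\cdot)$ is translation invariant, so \eqref{tormes2} is unchanged. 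For \textbf{(c)}, the representation \eqref{tormes2} exhibits $\mu^{tor}(\Omega,\cdot)$ as the integral of the non-negative density $v\mapsto|\nabla u(g^{-1}_\Omega(v))|^{2}$ against $dS(\Omega,v)$; this density is in $L^{1}(S(\Omega,\cdot))$ by the bound $|\nabla u|\leq\DIAM(\Omega)$ from Lemma \ref{argu}, so absolute continuity is immediate.

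The main content is the weak continuity \textbf{(d)}. The plan is to combine the Hadamard variational formula (Lemma \ref{argu2}) with the Hausdorff continuity of $T$ (Lemma \ref{T1}(iii)). Fix a convex body $K\in\koo$; because $\mathcal{D}(\Omega_i+tK,\Omega_0+tK)=\mathcal{D}(\Omega_i,\Omega_0)\to 0$, Lemma \ref{T1}(iii) gives pointwise convergence
\begin{equation*}
\phi_i(t):=T(\Omega_i+tK)\longrightarrow\phi_0(t):=T(\Omega_0+tK),\quad t\geq 0.
\end{equation*}
By Lemma \ref{argu2}, $\phi_i'(0^+)=\int_\sn h(K,v)\,d\mu^{tor}(\Omega_i,v)$. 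To pass to the limit in the derivative, I would invoke the Brunn-Minkowski inequality for torsional rigidity, which asserts that $\phi_i^{1/(n+2)}$ is concave on $[0,\infty)$. Concavity, together with $\phi_i(0)=T(\Omega_i)\to T(\Omega_0)>0$, makes the one-sided difference quotients monotone in $t$ and uniformly bounded in $i$, so a standard $\varepsilon$-$\delta$ argument yields
\begin{equation*}
\int_\sn h(K,v)\,d\mu^{tor}(\Omega_i,v)\longrightarrow\int_\sn h(K,v)\,d\mu^{tor}(\Omega_0,v)
\end{equation*}
for every $K\in\koo$.

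Since every $f\in C(\sn)$ can be uniformly approximated by differences of support functions of convex bodies, the preceding display extends by linearity and a density argument to $\int f\,d\mu^{tor}(\Omega_i,\cdot)\to\int f\,d\mu^{tor}(\Omega_0,\cdot)$ for all $f\in C(\sn)$. The approximation step requires uniform boundedness of the total masses $|\mu^{tor}(\Omega_i,\cdot)|$, obtained by specializing $K=B^{n}$ so that $h(K,\cdot)\equiv 1$; boundedness then follows from the uniform-in-$i$ bound on $\phi_i'(0^{+})$ coming from the concavity estimates. This is exactly the weak convergence claim of (d).

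The principal obstacle is the double limit hidden in (d): one must interchange $\lim_{i\to\infty}$ with $\partial/\partial t|_{t=0}$, and pointwise convergence $\phi_i\to\phi_0$ alone is insufficient. The decisive structural input is the concavity of $\phi_i^{1/(n+2)}$, which forces the difference quotients $(\phi_i(t)-\phi_i(0))/t$ to be monotone in $t$ on $(0,\infty)$, and therefore permits the exchange by first fixing $t_0>0$ small enough that $(\phi_0(t_0)-\phi_0(0))/t_0$ is close to $\phi_0'(0^{+})$, then letting $i\to\infty$ using pointwise convergence of $\phi_i(t_0)$ and $\phi_i(0)$, and finally exploiting monotonicity to control $\phi_i'(0^{+})$. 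An alternative route, directly controlling the density $|\nabla u_i|^{2}$ on $\partial\Omega_i$ and combining with weak convergence of $S(\Omega_i,\cdot)$, would require delicate handling of non-tangential boundary gradients under Hausdorff perturbation; the global approach through $T$ is what allows us to bypass these boundary issues entirely.
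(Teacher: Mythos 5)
The paper does not prove this lemma: it is cited verbatim from Colesanti--Fimiani \cite{CF10}, so there is no in-paper proof to compare against. Assessed on its own merits, your argument for (a)--(c) is correct and essentially forced by the representation \eqref{tormes2}. Your plan for (d) is a reasonable one, but as written it contains a genuine gap in the passage from pointwise convergence $\phi_i\to\phi_0$ to convergence of the one-sided derivatives.

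The difficulty is that concavity of $\psi_i:=\phi_i^{1/(n+2)}$ on the \emph{one-sided} interval $[0,\infty)$ only controls the difference quotients from above: for $t>0$ one has $\psi_i'(0^+)\geq\bigl(\psi_i(t)-\psi_i(0)\bigr)/t$. Passing to the limit and letting $t\to 0^+$ yields $\liminf_i\psi_i'(0^+)\geq\psi_0'(0^+)$, but gives no bound from the other side. Indeed, concave $\psi_i$ on $[0,1]$ with $\psi_i(0)\to\psi_0(0)$ and $\psi_i\to\psi_0$ pointwise can still have $\psi_i'(0^+)\to c>\psi_0'(0^+)$ (take $\psi_0(t)=t$, $\psi_i(t)=\min\{c\,t,\;t+(c-1)\varepsilon_i\}$ with $\varepsilon_i\to 0$); the monotonicity you invoke therefore cannot ``control $\phi_i'(0^+)$'' from above, and ``uniform boundedness'' of the difference quotients does not resolve this either, since boundedness does not force the right limit. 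The same issue infects the claimed mass bound: bounding $\int h(B^n,\cdot)\,d\mu^{tor}(\Omega_i,\cdot)$ uniformly does not follow from concavity on $[0,\infty)$; the clean source of that bound is Lemma \ref{argu}, which gives $|\nabla u|\leq\DIAM(\Omega_i)$, together with the convergence of the surface areas $\mathcal{H}^{n-1}(\partial\Omega_i)$.

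The argument can be repaired by working on a two-sided interval around $t=0$, which the Hadamard formula of Lemma \ref{argu2} already accommodates: for small $|t|$ let $[h_t^{(i)}]$ be the Wulff shape of $h(\Omega_i,\cdot)+t\,h(K,\cdot)$ (for $t<0$ this is the inner parallel body) and set $\phi_i(t)=T([h_t^{(i)}])$. Since $h([h_s],\cdot)\leq h(\Omega_i,\cdot)+s\,h(K,\cdot)$, one has $(1-\lambda)[h_{t_0}^{(i)}]+\lambda[h_{t_1}^{(i)}]\subseteq[h_{(1-\lambda)t_0+\lambda t_1}^{(i)}]$, so monotonicity of $T$ together with the Brunn--Minkowski inequality for torsional rigidity gives concavity of $\psi_i=\phi_i^{1/(n+2)}$ on a two-sided interval $(-\epsilon,\epsilon)$, uniformly in $i$ once the $\Omega_i$ contain a fixed ball. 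Aleksandrov's convergence lemma gives $[h_t^{(i)}]\to[h_t^{(0)}]$ in Hausdorff distance and thus $\phi_i(t)\to\phi_0(t)$ pointwise, and the differentiability of $\phi_0$ at the \emph{interior} point $t=0$ is exactly Lemma \ref{argu2}. For concave functions converging pointwise on an open interval, with the limit differentiable at an interior point, the one-sided derivatives at that point converge (the two-sided squeeze $\frac{\psi_0(t)-\psi_0(0)}{t}\leq\liminf_i\psi_i'(0^+)\leq\limsup_i\psi_i'(0^-)\leq\frac{\psi_0(0)-\psi_0(-t)}{t}$). This closes the gap, and the remaining steps (density of differences of support functions, uniform mass bound from Lemma \ref{argu}) are as you describe.
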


\subsection{Cone-torsion measure}
Analogous to torsion measure \eqref{Thadama66}, the differential of torsional rigidity produces the cone-torsion measure.
\begin{lem}\label{Con}
Let $\Omega$, $\Omega_{1}$ be convex bodies in ${\rnnn}$ and let $h(\Omega_{1},\cdot) $ be the support function of $\Omega_{1}$. For sufficiently small $|t|> 0$, while $\Omega_{t}$ is the Wulff shape of $h_{t}$ defined by
\[
\log h_{t}(v)=
\log h(\Omega,v)+th(\Omega_{1},v)+o(t,v),\quad v\in \sn,
\]
where $o(t,\cdot)/t\rightarrow 0$ uniformly on $\sn$ as $t\rightarrow 0$, then
\begin{equation}\label{CO}
\frac{d}{dt}T(\Omega_{t})\big|_{t=0}=(n+2)\int_{{\sn}}h(\Omega_{1},v){d}G^{tor}(\Omega,v).
\end{equation}
\end{lem}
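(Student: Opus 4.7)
The strategy is to reduce the logarithmic variational formula to the additive one in Lemma \ref{argu2}. Since $\Omega$ contains the origin in its interior (implicit in the hypothesis, as otherwise $\log h(\Omega,\cdot)$ is undefined), $h(\Omega,\cdot)>0$ on $\sn$, so exponentiating the defining relation gives
\[
h_t(v)=h(\Omega,v)\exp\bigl(t\,h(\Omega_1,v)+o(t,v)\bigr)=h(\Omega,v)+t\,h(\Omega,v)h(\Omega_1,v)+\tilde o(t,v),
\]
where $\tilde o(t,v)/t\to 0$ uniformly on $\sn$ as $t\to 0$ (using that $h(\Omega,\cdot)$ and $h(\Omega_1,\cdot)$ are bounded continuous functions and that $e^s-1-s=O(s^2)$ with the error controlled uniformly in $v$).

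Next I would invoke Lemma \ref{argu2} with the effective additive perturbation $f(v)=h(\Omega,v)h(\Omega_1,v)$ in place of $h(\Omega_1,v)$. The key observation is that the conclusion of Lemma \ref{argu2} is really a statement about the Wulff shape of a positive continuous perturbation of $h(\Omega,\cdot)$; the fact that the linear term happens to be the support function of some auxiliary convex body plays no role in the proof, so the result extends verbatim to arbitrary $f\in C(\sn)$. Applying it to our $f$ yields
\[
\frac{d}{dt}T(\Omega_t)\Big|_{t=0}=\int_{\sn}h(\Omega,v)h(\Omega_1,v)\,d\mu^{tor}(\Omega,v).
\]

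Finally, the definition \eqref{CTC} of the cone-torsion measure, rewritten using the Gauss map change of variables $v=g_\Omega(x)$, amounts to the identity of Borel measures
\[
(n+2)\,dG^{tor}(\Omega,v)=h(\Omega,v)\,d\mu^{tor}(\Omega,v) \quad \text{on } \sn.
\]
Substituting this into the previous display produces the claimed formula \eqref{CO}. I expect the main (and essentially only) subtle point to be justifying the extension of Lemma \ref{argu2} to perturbations that are not support functions; this amounts to a brief inspection of the standard Wulff-shape argument of Colesanti--Fimiani and does not require any new idea.
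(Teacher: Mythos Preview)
Your argument is correct and is precisely the reduction one would carry out; note, however, that the paper does not actually supply a proof of this lemma --- it simply states it as the logarithmic analogue of Lemma~\ref{argu2} and moves on. Your proposal makes explicit what the paper leaves implicit (the Taylor expansion of $h_t$ and the identification $(n+2)\,dG^{tor}(\Omega,\cdot)=h(\Omega,\cdot)\,d\mu^{tor}(\Omega,\cdot)$), so there is nothing further to compare.
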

We remark that the cone torsion measure $G^{tor}(\Omega,\cdot)$ is homogeneous of degree $(n+2)$, and $T(\Omega)=G^{tor}(\Omega,\sn)$.

\section{The associated extremal problem}

\label{Sec3}

In this section, we study an extremal problem associated to the discrete torsion log-Minkowski problem for torsional rigidity.

Suppose that $\beta_{1},\ldots,\beta_{N}\in (0,\infty)$, the unit vectors $v_{1},\ldots,v_{N}$ $(N\geq n+1)$ are not concentrated on any closed hemisphere, $\mu$ is the discrete measure on ${\sn}$ such that
\begin{equation}\label{bordef}
\mu=\sum_{k=1}^{N}\beta_{k}\delta_{v_{k}}(\cdot),
\end{equation}
where $\delta_{v_{k}}$ denotes the delta measure that is concentrated at the point $v_{k}$.

As is established in ~\cite{Zhu14},  we define the functional $\Phi_{P}:P\rightarrow {\R}$ by
\begin{equation}\label{phimax}
\Phi_{P}(\gamma)=\sum_{k=1}^{N}\beta_{k}\log(h(P,v_{k})-\gamma\cdot v_{k}).
\end{equation}

Now, we consider the following the extreme value problem,
\begin{equation}\label{extre3}
\inf\left\{\max_{\gamma\in Q}\Phi_{Q}(\gamma):Q\in \mathcal{P}(\nu_{1},\ldots,\nu_{N}),T(Q)=1\right\}.
\end{equation}

In the light of \eqref{extre3}, we begin with showing that there exists a unique $\gamma(P)\in Int(P)$ (i.e., the interior of $P$) such that
\begin{equation}\label{extre4}
\Phi_{P}(\gamma(P))=\max_{\gamma\in Int (P)}\Phi_{P}(\gamma),
\end{equation}
with the following results.
\begin{lem}\label{INT}
Let $\beta_{1},\ldots,\beta_{N}\in (0,\infty)$, suppose that the unit vectors $v_{1},\ldots,v_{N}$ $(N\geq n+1)$ are not concentrated on any closed hemisphere, $P\in \mathcal{P}(u_{1},\ldots,u_{N})$, then there exists a unique maximum point $\gamma\in Int(P)$ such that
\[
\Phi_{P}(\gamma(P))=\max_{\gamma\in P}\Phi_{P}(\gamma).
\]
\end{lem}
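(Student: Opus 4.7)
The plan is to establish this as a standard strict concavity argument for a log-sum function with coercive behavior at the boundary of $P$.

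First I would observe that for each $\gamma\in\mathrm{Int}(P)$ and each $k$, the representation $P=\bigcap_{j=1}^{N}H^{-}(P,v_{j})$ forces $h(P,v_{k})-\gamma\cdot v_{k}>0$, so each summand in \eqref{phimax} is finite and smooth. As $\gamma$ approaches a boundary point $\gamma_{0}\in\partial P$, there is some index $k_{0}$ with $h(P,v_{k_{0}})-\gamma_{0}\cdot v_{k_{0}}=0$ (the supporting hyperplane containing $\gamma_{0}$), while the remaining terms stay bounded above by $\log(2\,\DIAM(P))$. Hence $\Phi_{P}(\gamma)\to -\infty$ as $\gamma\to\partial P$. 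Together with the continuity of $\Phi_{P}$ on $\mathrm{Int}(P)$, this shows that $\Phi_{P}$ extended by $-\infty$ on $\partial P$ is upper semicontinuous on the compact set $P$, and therefore attains its maximum at some interior point $\gamma(P)\in\mathrm{Int}(P)$. Since $\Phi_{P}$ is clearly not $-\infty$ somewhere in $\mathrm{Int}(P)$ (e.g.\ at any interior point), the maximum is finite.

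Next I would verify strict concavity on $\mathrm{Int}(P)$. For fixed $v_{k}$, the map $\gamma\mapsto \log(h(P,v_{k})-\gamma\cdot v_{k})$ is the composition of the concave function $\log$ with an affine function, so it is concave in $\gamma$, with Hessian
\[
-\frac{v_{k}\otimes v_{k}}{(h(P,v_{k})-\gamma\cdot v_{k})^{2}}.
\]
Summing with positive weights $\beta_{k}$ gives
\[
D^{2}\Phi_{P}(\gamma)=-\sum_{k=1}^{N}\frac{\beta_{k}}{(h(P,v_{k})-\gamma\cdot v_{k})^{2}}\,v_{k}\otimes v_{k}.
\]
Because the vectors $v_{1},\ldots,v_{N}$ are not contained in any closed hemisphere, they positively span $\rnnn$ and in particular linearly span $\rnnn$; therefore for every nonzero $w\in\rnnn$ at least one $v_{k}\cdot w\neq 0$, which makes $w^{\top}D^{2}\Phi_{P}(\gamma)\,w<0$. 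Thus $\Phi_{P}$ is strictly concave on $\mathrm{Int}(P)$.

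Uniqueness then follows immediately: if two distinct maximizers $\gamma_{1},\gamma_{2}\in\mathrm{Int}(P)$ existed, strict concavity along the segment joining them would give $\Phi_{P}((\gamma_{1}+\gamma_{2})/2)>\tfrac{1}{2}(\Phi_{P}(\gamma_{1})+\Phi_{P}(\gamma_{2}))=\max\Phi_{P}$, a contradiction. Combining existence and uniqueness yields the lemma.

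No step is really difficult here; the only delicate point is ensuring strict (not merely weak) concavity, which is exactly where the hypothesis that $\{v_{k}\}$ is not concentrated in any closed hemisphere enters (to guarantee the $v_{k}$ linearly span $\rnnn$). The rest is the routine observation that a strictly concave function that tends to $-\infty$ on the boundary of a bounded convex domain attains a unique interior maximum.
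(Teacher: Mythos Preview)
Your proof is correct and follows essentially the same approach as the paper: both establish strict concavity of $\Phi_{P}$ on $\mathrm{Int}(P)$ (you via the Hessian, the paper via Jensen's inequality for $\log$), use the non-hemisphere condition to make the concavity strict, and then invoke $\Phi_{P}\to-\infty$ at $\partial P$ to force the maximizer into the interior. The only difference is cosmetic---your Hessian computation versus the paper's direct inequality---so nothing further to add.
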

\begin{proof}
 Firstly, we show the existence and uniqueness of the maximum point. By the aid of the concavity $\log t$ on $[0,\infty)$, thus for $0<\alpha<1$, $\gamma_{1}, \gamma_{2}\in P$, we obtain
\begin{align*}\label{}
&\Phi_{P}(\alpha\gamma_{1}+(1-\alpha)\gamma_{2})\notag\\
&=\sum_{k=1}^{N}\beta_{k}\log [h(P,v_{k})-(\alpha\gamma_{1}+(1-\alpha)\gamma_{2})\cdot v_{k}]\notag\\
&\geq \alpha\sum_{k=1}^{N}\beta_{k}\log(h(P,v_{k})-\gamma_{1}\cdot v_{k})+ (1-\alpha)\sum_{k=1}^{N}\beta_{k}\log(h(P,v_{k})-\gamma_{2}\cdot v_{k})\notag\\
&=\alpha\Phi_{P}(\gamma_{1})+(1-\alpha)\Phi_{P}(\gamma_{2}).
\end{align*}
In the light of the fact that $P$ is convex, $\alpha\gamma_{1}+(1-\alpha)\gamma_{2}\in P$. Then, equality holds if and only if
\[
h(P,v_{k})-\gamma_{1}\cdot v_{k}=h(P,v_{k})-\gamma_{2}\cdot v_{k},\quad k=1,\ldots,N.
\]
Namely,
\[
\gamma_{1}\cdot v_{k}=\gamma_{2}\cdot v_{k},\quad k=1,\ldots,N.
\]
Since the unit vectors $v_{1},\ldots,v_{N}$ $(N\geq n+1)$ are not concentrated on any closed hemisphere, so we get $\gamma_{1}=\gamma_{2}$, which implies that $\Phi_{P}$ is strictly concave. Then, the existence and uniqueness of a maximum point are obtained by the continuity and strictly concavity of $\Phi_{P}$ and the compactness of $P$ .

Secondly, we are prepared to show that
\[
\Phi_{P}(\gamma(P))=\max_{\gamma\in Int(P)}\Phi_{P}(\gamma)
\]
with $\gamma\in Int(P)$. To prove that, argue by contradiction, assume that a sequence of interior points $\gamma_{j}\rightarrow \partial P$, then $\Phi_{P}(\gamma_{j})\rightarrow -
 \infty$, which follows from $\log {0}=-\infty$.

 Via the above statement, we see that the proof is completed.
\end{proof}

Next, we do some preparations.
\begin{lem}\label{Pcvg}
Let $\beta_{1},\ldots,\beta_{N}\in (0,\infty)$, and the unit vectors $v_{1},\ldots,v_{N}$ $(N\geq n+1)$ are not concentrated on any closed hemisphere, suppose $P_{i}\in \mathcal{P}(v_{1},\ldots, v_{N})$ and $P_{i}\rightarrow P$ as $i\rightarrow \infty$. Then
\[
\lim_{i\rightarrow \infty}\gamma(P_{i})=\gamma(P),\quad  \lim_{i\rightarrow \infty}\Phi_{P_{i}}(\gamma (P_{i}))=\Phi_{P}(\gamma (P)).
\]
\begin{proof}
We adopt contradiction arguments along the similar line as  \cite{Zhu14}. Let $P_{i_{j}}$ be a subsequence of $P_{i}$ such that $P_{i_{j}}$ converges to $P$ , satisfying $\gamma(P_{i_{j}})\rightarrow \gamma_{0}$, but $\gamma_{0}\neq \gamma(P)$. Here clearly, $\gamma_{0}\in P$.

 We first show that $\gamma_{0}$ is an interior point of $P$. Since $\gamma(P)\in Int (P)$ and $P_{i}$ converges to $P$, there exists an $N_{0}>0$ such that
\[
h(P_{i},v_{k})-\gamma(P)\cdot v_{k}> c_{0}
\]
for all $k=1,\ldots, N$, $i>N_{0}$, and here $c_{0}=\frac{1}{2}\min_{v\in \sn}\{ h(P,v)-\gamma(P)\cdot v\}>0$. Then, it follows that
\begin{equation}\label{iuq}
\Phi_{P_{i}}(\gamma(P_{i}))\geq\Phi_{P_{i}}(\gamma(P))>\left(\sum^{N}_{k=1}\beta_{k}\right)\log \frac{c_{0}}{2}, \ {\rm for } \ i> N_{0}.
\end{equation}
If $\gamma_{0}$ is  a boundary point of $P$, then $\lim_{j\rightarrow\infty}\Phi_{P_{i_{j}}}(\gamma(P_{i_{j}}))=-\infty$, which  contradicts to \eqref{iuq}.

In view of the fact that $\gamma_{0}$ is an interior point of $P$ with $\gamma_{0}\neq \gamma(P)$,  then by ~\cite[Theorem 1.8.8]{S14} and the definition of $\gamma(P)$ showed in ~\eqref{extre4}, it is clear to imply that
\begin{equation}\label{P1}
\Phi_{P}(\gamma_{0})< \Phi_{P}(\gamma(P)).
\end{equation}
On the other hand, by the continuity of $\Phi_{P}(\gamma(\cdot))$ in $P$ and $ \gamma(\cdot)$, we have
\begin{equation}\label{P2}
\lim_{j\rightarrow \infty}\Phi_{P_{i_{j}}}(\gamma(P_{i_{j}}))=\Phi_{P}(\gamma_{0}).
\end{equation}
Meanwhile,
\begin{equation}\label{P3}
\lim_{j\rightarrow \infty}\Phi_{P_{i_{j}}}(\gamma(P))=\Phi_{P}(\gamma(P)).
\end{equation}
Combining ~\eqref{P1}, ~\eqref{P2}, ~\eqref{P3}, we get
\begin{equation}\label{ctrat}
\lim_{j\rightarrow \infty}\Phi_{P_{i_{j}}}(\gamma(P_{i_{j}}))<\lim_{j\rightarrow \infty}\Phi_{P_{i_{j}}}(\gamma(P)).
\end{equation}
But for any $P_{i_{j}}$, the fact is that
\[
\Phi_{P_{i_{j}}}(\gamma(P_{i_{j}}))\geq \Phi_{P_{i_{j}}}(\gamma(P)).
\]
Therefore,
\[
\lim_{j\rightarrow \infty}\Phi_{P_{i_{j}}}(\gamma(P_{i_{j}}))\geq \lim_{j\rightarrow \infty}\Phi_{P_{i_{j}}}(\gamma(P)).
\]
This contradicts to ~\eqref{ctrat}. So, $\lim_{i\rightarrow \infty}\gamma(P_{i})=\gamma(P)$. Make use of the continuity of $\Phi_{P}(\cdot)$, we get
\[
\lim_{i\rightarrow \infty}\Phi_{P_{i}}(\gamma(P_{i}))=\Phi_{P}(\gamma(P)).
\]
The proof is completed.
\end{proof}
\end{lem}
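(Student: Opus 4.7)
The plan is a standard subsequence-compactness argument exploiting the strict concavity of $\Phi_P$ and the $-\infty$ blow-up of $\log$ near zero. Since $P_i \to P$ in the Hausdorff metric, the $P_i$ are eventually contained in a common ball, so $\{\gamma(P_i)\}$ is bounded. It suffices to show that every convergent subsequence has limit $\gamma(P)$; extracting one, write $\gamma(P_{i_j}) \to \gamma_0$. Because $\gamma(P_{i_j}) \in P_{i_j}$ and $P_{i_j} \to P$, we have $\gamma_0 \in P$, and the entire task reduces to proving $\gamma_0 = \gamma(P)$.

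The crux is ruling out $\gamma_0 \in \partial P$; once this is done, joint continuity of the map $(Q, \gamma) \mapsto \Phi_Q(\gamma)$ on the open region where all arguments of the logarithms are positive will give $\Phi_{P_{i_j}}(\gamma(P_{i_j})) \to \Phi_P(\gamma_0)$ and $\Phi_{P_{i_j}}(\gamma(P)) \to \Phi_P(\gamma(P))$. The defining maximality of $\gamma(P_{i_j})$ gives
\[
\Phi_{P_{i_j}}(\gamma(P_{i_j})) \geq \Phi_{P_{i_j}}(\gamma(P)),
\]
so in the limit $\Phi_P(\gamma_0) \geq \Phi_P(\gamma(P))$. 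By the uniqueness of the maximizer established in Lemma \ref{INT}, this forces $\gamma_0 = \gamma(P)$.

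To exclude $\gamma_0 \in \partial P$, I would combine a uniform lower bound on $\Phi_{P_{i_j}}(\gamma(P_{i_j}))$ with the boundary blow-up. Since $\gamma(P) \in \mathrm{Int}(P)$, the constant $c_0 := \min_{1 \leq k \leq N}\bigl(h(P,v_k) - \gamma(P) \cdot v_k\bigr)$ is strictly positive. Hausdorff convergence yields $h(P_{i_j}, v_k) \to h(P, v_k)$ for every $k$, so for $j$ large each factor $h(P_{i_j}, v_k) - \gamma(P) \cdot v_k$ exceeds $c_0/2$, whence $\Phi_{P_{i_j}}(\gamma(P)) \geq \bigl(\sum_k \beta_k\bigr)\log(c_0/2)$. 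By optimality, the same lower bound holds for $\Phi_{P_{i_j}}(\gamma(P_{i_j}))$. If instead $\gamma_0 \in \partial P$, then since $P = \bigcap_k H^-(P, v_k)$ at least one index $k_0$ satisfies $h(P, v_{k_0}) - \gamma_0 \cdot v_{k_0} = 0$; the corresponding summand of $\Phi_{P_{i_j}}(\gamma(P_{i_j}))$ tends to $-\infty$, while the remaining summands stay bounded above, forcing $\Phi_{P_{i_j}}(\gamma(P_{i_j})) \to -\infty$ and contradicting the uniform lower bound.

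The convergence $\Phi_{P_i}(\gamma(P_i)) \to \Phi_P(\gamma(P))$ then follows immediately from joint continuity since $\gamma(P_i) \to \gamma(P) \in \mathrm{Int}(P)$. The main obstacle, as indicated, is precisely the boundary-exclusion step: because $\Phi$ is a sum of logarithms that blow up on $\partial P$, one needs both the optimality of $\gamma(P_i)$ and the Hausdorff continuity of support functions to trap the limit strictly inside $P$; everything else is routine compactness coupled with the uniqueness from Lemma \ref{INT}.
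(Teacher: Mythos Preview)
Your proposal is correct and follows essentially the same approach as the paper's proof: both use subsequence compactness, exclude $\gamma_0\in\partial P$ via the uniform lower bound $\Phi_{P_{i_j}}(\gamma(P_{i_j}))\geq\Phi_{P_{i_j}}(\gamma(P))\geq(\sum_k\beta_k)\log(c_0/2)$ combined with the boundary blow-up, and then invoke joint continuity together with the optimality inequality and the uniqueness of the maximizer from Lemma~\ref{INT}. The only cosmetic difference is that the paper phrases the final step as a contradiction (assuming $\gamma_0\neq\gamma(P)$ and deriving $\Phi_P(\gamma_0)<\Phi_P(\gamma(P))$), whereas you argue directly that $\Phi_P(\gamma_0)\geq\Phi_P(\gamma(P))$ forces $\gamma_0=\gamma(P)$; the ingredients are identical.
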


The following key lemma proved by \cite{GXZ23} showed that the polytopes whose normals are in general position: if the polytope gets large, then it has to get large uniformly in all directions, which is helpful to get uniform priori bounds.
\begin{lem}\label{GGH}
Let $v_{1},\ldots, v_{N}$ be $N$ unit vectors that are not contained in any closed hemisphere and $P_{i}$ be a sequence of polytopes in $P\in \mathcal{P}(v_{1},\ldots,v_{N})$. Assume the vectors $v_{1},\ldots, v_{N}$ are in general position in dimension $n$. If the outer radii of $P_{i}$ are not uniformly bounded in $i$, then their inner radii are not uniformly bounded in $i$ either.
\end{lem}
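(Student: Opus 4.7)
My plan is to argue by contradiction: assume along a subsequence (still denoted $\{P_i\}$) that the outer radii $R(P_i)\to\infty$ while the inner radii $r(P_i)\leq c$ stay uniformly bounded, and translate each $P_i$ so that its incenter is at the origin. This gives $r(P_i)B^n\subset P_i$ and $h(P_i,v_k)\geq r(P_i)$ for every $k$.

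First I will use general position to convert the growth of $R(P_i)$ into a pointwise bound on the support numbers. Every vertex of $P_i$ is the unique intersection of $n$ facet hyperplanes $\{x:x\cdot v_{k_j}=h(P_i,v_{k_j})\}_{j=1}^{n}$; by general position the matrix $A$ with rows $v_{k_j}$ is invertible, so the vertex equals $A^{-1}(h(P_i,v_{k_1}),\ldots,h(P_i,v_{k_n}))^{T}$. Since there are only finitely many $n$-tuples of indices and $\|A^{-1}\|$ is uniformly bounded over them by some $C=C(\{v_1,\ldots,v_N\})$, I obtain $R(P_i)\leq CM_i$ with $M_i:=\max_k h(P_i,v_k)$. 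In particular $M_i\to\infty$.

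I will then rescale by setting $Q_i:=P_i/M_i$. These polytopes have uniformly bounded diameter and contain $(r(P_i)/M_i)B^n$ centered at the origin with vanishing radius. Up to a subsequence, $h(P_i,v_k)/M_i\to b_k\in[0,1]$ for every $k$ with $b_{k^*}=1$ for a distinguished index $k^*$, and by Blaschke's selection theorem $Q_i\to Q:=\bigcap_k\{x:x\cdot v_k\leq b_k\}$ in Hausdorff distance. Then $Q$ is compact convex, contains $0$, has empty interior, and hence lies in some hyperplane $w^{\perp}$. The vertex of $Q_i$ lying on the face with normal $v_{k^*}$ satisfies $|y_i|\geq y_i\cdot v_{k^*}=1$, and its limit is a point $z\in Q\cap w^{\perp}$ with $|z|\geq 1$ and $z\cdot v_{k^*}=1$.

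The hard part, and the place where the full strength of the general position hypothesis enters, will be extracting a contradiction. From $h(Q,\pm w)=0$ and the LP-dual representation of the support function of the polytope $Q$, one has $\pm w=\sum_{k\in B}\lambda_k^{\pm}v_k$ with $\lambda_k^{\pm}\geq 0$, where $B:=\{k:b_k=0\}$; adding yields a non-trivial positive linear dependence among $\{v_k:k\in B\}$. By Carath\'eodory I may further restrict to a subset $B''\subseteq B$ of size at most $n+1$ carrying a strict positive dependence $\sum_{k\in B''}\mu_k v_k=0$ with $\mu_k>0$. General position rules out $|B''|\leq n$ (since any $n$ of the $v_k$'s are linearly independent and admit no non-trivial dependence), so $|B''|=n+1$; then the linear span of $B''$ is all of $\R^n$, and a standard observation about strict positive dependences shows that $B''$ positively spans $\R^n$. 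Writing $v_{k^*}=\sum_{k\in B''}\beta_k v_k$ with $\beta_k\geq 0$ and combining with $z\cdot v_k\leq b_k=0$ for $k\in B''$ forces $z\cdot v_{k^*}\leq 0$, contradicting $z\cdot v_{k^*}=1$. This final linear-algebraic step is the main obstacle of the argument, but it resolves cleanly once the degenerate limit $Q$ and the distinguished vertex $z$ have been set up.
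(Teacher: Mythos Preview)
The paper does not supply its own proof of this lemma; it is quoted from Guo--Xi--Zhao \cite{GXZ23}, so there is no in-paper argument to compare against.

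Your argument is correct. After normalizing to $Q_i=P_i/M_i$ and passing to a degenerate Hausdorff limit $Q\subset w^{\perp}$, the LP-duality step forces $\pm w$ into the nonnegative cone of $\{v_k:b_k=0\}$, and the general-position hypothesis then pins any minimal positive circuit among these vectors to size exactly $n+1$, hence positively spanning $\rnnn$; writing $v_{k^*}$ as a nonnegative combination of that circuit gives $z\cdot v_{k^*}\le 0$, contradicting $z\cdot v_{k^*}=1$. Two small points deserve a line each in a polished write-up: (i) the identification of the Blaschke limit with $\bigcap_k\{x:x\cdot v_k\le b_k\}$ uses the (standard) continuity of an $H$-polytope in its right-hand-side vector at a nonempty limit, which holds here since $0\in Q_i$ for all $i$; (ii) a further pigeonhole subsequence is needed to fix a single index $k^*$ attaining $M_i=\max_k h(P_i,v_k)$ for all $i$, so that $b_{k^*}=1$ exactly.
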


Using Lemma \ref{GGH}, we obtain the following result.
\begin{coro}\label{GGH2}
Let $v_{1},\ldots, v_{N}\in \sn$ be $N$ unit vectors are not contained in any closed hemisphere and $P\in \mathcal{P}(v_{1},\ldots,v_{N})$. Assume that $v_{1},\ldots,v_{N}$ are in general position in dimension $n$. If the outer radius of $P_{i}$ is not uniformly bounded, then torsional rigidity $T(P_{i})$ is also unbounded.
\end{coro}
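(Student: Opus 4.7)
The plan is to combine Lemma \ref{GGH} with the monotonicity, translation invariance, and homogeneity of torsional rigidity recorded in Lemma \ref{T1}. By contrapositive, I will assume the outer radii $R_i$ of $P_i$ fail to be uniformly bounded and show $T(P_i) \to \infty$ along a suitable subsequence, which is enough to conclude that $T(P_i)$ is unbounded.

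First, I would invoke Lemma \ref{GGH}: since $v_1,\ldots,v_N$ are in general position in dimension $n$, the failure of uniform boundedness of the outer radii forces the inner radii to be unbounded as well. Passing to a subsequence (still denoted $P_i$), I may therefore choose inscribed balls $B(x_i, r_i) \subseteq P_i$ with $r_i \to \infty$.

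Next, by the monotonicity of torsional rigidity under inclusion (Lemma \ref{T1}(iv)), one has
\[
T(P_i) \;\geq\; T(B(x_i, r_i)).
\]
Applying translation invariance (Lemma \ref{T1}(ii)) and then the positive $(n{+}2)$-homogeneity (Lemma \ref{T1}(i)) gives
\[
T(B(x_i, r_i)) \;=\; T(r_i B^n) \;=\; r_i^{\,n+2}\, T(B^n) \;=\; \frac{\omega_n}{n(n+2)}\, r_i^{\,n+2}.
\]
Since $r_i \to \infty$, it follows that $T(P_i) \to \infty$ along this subsequence, so the sequence $T(P_i)$ is not bounded.

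There is no real obstacle here: the result is a straightforward consequence of Lemma \ref{GGH} together with the elementary properties of $T(\cdot)$. The only minor subtlety is that Lemma \ref{GGH} supplies the unbounded inner radii only after passing to a subsequence, but unboundedness of $T(P_i)$ along a subsequence is precisely what is needed to conclude that the full sequence $T(P_i)$ is unbounded.
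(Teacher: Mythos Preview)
Your proof is correct and follows essentially the same approach as the paper's: the paper's argument is a one-line appeal to Lemma~\ref{GGH} together with the homogeneity, translation invariance, and positivity of $T(B^n)$, and your write-up simply spells out these steps in detail (and rightly makes explicit the use of monotonicity, Lemma~\ref{T1}(iv), which the paper leaves implicit).
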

\begin{proof}
This holds by using Lemma \ref{GGH}, the homogeneity and the translation invariance of $T$, and the fact that $T(B^{n})$ is positive for the centered unit ball $B^{n}$.
\end{proof}

It's required to prove the following lemmas for giving the existence of the solution to the torsion log-Minkowski problem.

\begin{lem}\label{mexist}
If $\beta_{1},\ldots,\beta_{N}\in (0,\infty)$, the unit vectors $v_{1},\ldots,v_{N}$ $(N\geq n+1)$ are in general position in dimension $n$, then there exists a polytope $P\in \mathcal{P}(v_{1},\ldots,v_{N})$ solving \eqref{extre3} such that $P$ has exactly $N$ facets, $\gamma(P)=o$, $T(P)=1$ and
\[
\Phi_{P}(o)=\inf\left\{\max_{\gamma\in Q}\Phi_{Q}(\gamma):Q\in \mathcal{P}(v_{1},\ldots,v_{N}),T(Q)=1\right\}.
\]
\end{lem}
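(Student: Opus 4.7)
The plan is the standard compactness-plus-perturbation scheme, in the spirit of Zhu's treatment of the volumetric log-Minkowski problem. First, pick a minimizing sequence $Q_i\in\mathcal{P}(v_1,\ldots,v_N)$ with $T(Q_i)=1$. Since $T$ is translation invariant (Lemma \ref{T1}) and the functional satisfies $\Phi_{Q-a}(\gamma)=\Phi_Q(\gamma+a)$, I replace $Q_i$ by the translate $P_i:=Q_i-\gamma(Q_i)$. Then $\gamma(P_i)=o$, $T(P_i)=1$, and $\Phi_{P_i}(o)=\Phi_{Q_i}(\gamma(Q_i))$, so $(P_i)$ is still a minimizing sequence.

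Next I extract a limit. Since $T(P_i)=1$ stays bounded, Corollary \ref{GGH2} forces the outer radii of the $P_i$ to be uniformly bounded; Blaschke's selection theorem then gives a subsequence $P_{i_j}\to P$ in the Hausdorff metric. The de Saint Venant inequality (Lemma \ref{JKT}) yields $|P_i|\geq c_0>0$, hence $|P|\geq c_0$ and $P$ is a convex body. Each $P_{i_j}$ equals $\bigcap_k\{x:x\cdot v_k\leq h(P_{i_j},v_k)\}$, and since support functions converge uniformly on the finite set $\{v_1,\ldots,v_N\}$, the limit satisfies $P=\bigcap_k\{x:x\cdot v_k\leq h(P,v_k)\}$, so $P\in\mathcal{P}(v_1,\ldots,v_N)$. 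Continuity of $T$ (Lemma \ref{T1}) gives $T(P)=1$, and Lemma \ref{Pcvg} gives $\gamma(P)=\lim\gamma(P_{i_j})=o$ together with $\Phi_P(o)=\lim\Phi_{P_{i_j}}(o)$, so $P$ realizes the infimum with $\gamma(P)=o$ and $T(P)=1$.

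It remains to show that $P$ has exactly $N$ facets. Argue by contradiction: suppose some $v_l$ is not a facet normal of $P$, so that $F(P,v_l)$ has dimension $d\leq n-2$. Consider the truncation $P^\epsilon:=P\cap\{x:x\cdot v_l\leq h(P,v_l)-\epsilon\}$ and its scaling $\widetilde P^\epsilon:=\lambda_\epsilon P^\epsilon$ with $\lambda_\epsilon:=T(P^\epsilon)^{-1/(n+2)}>1$, so that $T(\widetilde P^\epsilon)=1$ and $\widetilde P^\epsilon\in\mathcal{P}(v_1,\ldots,v_N)$. The scaling identity $\gamma(\lambda\Omega)=\lambda\gamma(\Omega)$ gives
\[
\Phi_{\widetilde P^\epsilon}(\gamma(\widetilde P^\epsilon))=\Big(\sum_{k=1}^N\beta_k\Big)\log\lambda_\epsilon+\Phi_{P^\epsilon}(\gamma(P^\epsilon)).
\]
An envelope-theorem expansion about the maximizer $\gamma(P)=o$ yields
\[
\Phi_{P^\epsilon}(\gamma(P^\epsilon))=\Phi_P(o)-\frac{\beta_l}{h(P,v_l)}\,\epsilon+O(\epsilon^2),
\]
while the low dimension of $F(P,v_l)$ forces $|P\setminus P^\epsilon|=O(\epsilon^{n-d})=O(\epsilon^2)$, from which one obtains $T(P)-T(P^\epsilon)=o(\epsilon)$ and hence $\log\lambda_\epsilon=o(\epsilon)$. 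Combining these estimates gives $\Phi_{\widetilde P^\epsilon}(\gamma(\widetilde P^\epsilon))<\Phi_P(o)$ for small $\epsilon>0$, contradicting the minimality of $P$. Hence every $v_k$ is a facet normal, and $P$ has exactly $N$ facets.

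The main obstacle is this last estimate $T(P)-T(P^\epsilon)=o(\epsilon)$. Heuristically the Hadamard formula (Lemma \ref{argu2}) suggests $\tfrac{d}{d\epsilon}T(P^\epsilon)\big|_{\epsilon=0^+}=-\mu^{tor}(P,\{v_l\})=0$ because $v_l$ is not a facet normal of $P$, but that formula is stated for smooth Minkowski variations rather than one-sided truncations of the support function. A rigorous argument instead uses either a direct maximum-principle estimate on $u_P-u_{P^\epsilon}$, exploiting that the new face lies within distance $O(\epsilon)$ of $\partial P$, or a direct volume comparison applied to the sliver $P\setminus P^\epsilon$ combined with the integral representation \eqref{tordef2} of $T$.
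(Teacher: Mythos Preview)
Your overall strategy matches the paper's exactly: translate the minimizing sequence so that $\gamma(P_i)=o$, invoke Corollary~\ref{GGH2} for boundedness, apply Blaschke selection and Lemma~\ref{Pcvg} to obtain a minimizer $P$ with $\gamma(P)=o$ and $T(P)=1$, and then truncate and rescale to rule out a missing facet.

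The gap you yourself flag is real, and it is precisely where the paper's argument diverges from your sketch. Rather than pursue volume estimates or a maximum-principle comparison of $u_P$ and $u_{P^\epsilon}$, the paper \emph{does} use the Hadamard formula that you set aside as inapplicable. It works with the one-parameter polytope family $P_\delta$ directly, writing
\[
\frac{dT(P_\delta)}{d\delta}=\sum_{k=1}^N \frac{dh(P_\delta,v_k)}{d\delta}\,\mu^{tor}(P_\delta,\{v_k\}),
\]
and argues that this sum tends to $0$ as $\delta\to0^+$: if $\mu^{tor}(P,\{v_k\})\neq0$, the absolute continuity of $\mu^{tor}(P,\cdot)$ with respect to $S(P,\cdot)$ (Lemma~\ref{meau1}(c)) forces $v_k$ to be a genuine facet normal of $P$, hence $h(P_\delta,v_k)=h(P,v_k)$ for small $\delta$ and the derivative term vanishes; if $\mu^{tor}(P,\{v_k\})=0$, the term dies by weak convergence of $\mu^{tor}(P_\delta,\cdot)$. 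This is exactly your ``heuristic'' computation $\frac{d}{d\epsilon}T(P^\epsilon)\big|_{\epsilon=0^+}=-\mu^{tor}(P,\{v_l\})=0$, made rigorous. The paper packages it inside an auxiliary function $M(\delta)$ (obtained by bounding the logarithmic difference via concavity of $\log$ and the diameter, rather than by Taylor expansion), shows $M(0)=0$ and $M'(0^+)<0$, and concludes.

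Two minor points on your write-up: your envelope-theorem expansion claims an $O(\epsilon^2)$ remainder, which is not justified (other non-facet $v_k$ may have $h(P^\epsilon,v_k)<h(P,v_k)$ to first order); fortunately only an upper bound is needed, and the paper obtains it cleanly from $\Phi_P(\gamma(P_\delta))\le\Phi_P(o)$ together with concavity of $\log$. Also, your claim that $|P\setminus P^\epsilon|=O(\epsilon^2)$ implies $T(P)-T(P^\epsilon)=o(\epsilon)$ is not immediate, since $u_P$ and $u_{P^\epsilon}$ differ on all of $P^\epsilon$, not just on the sliver.
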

\begin{proof}
 By virtue of the translation invariance of $\Phi_{P}$, we can choose a sequence $P_{i}\in \mathcal{P}(v_{1},\ldots,v_{N})$ with $\gamma(P_{i})=o$ and $T(P_{i})=1$  is a minimizing sequence of problem \eqref{extre3}.

 Corollary \ref{GGH2} shows that $P_{i}$ is  bounded. This together with the Blaschke selection theorem (see, e.g., ~\cite[Theorem 1.8.7]{S14}), we know that there exists a subsequence of $P_{i}$ that converges to a $P$. Making use of the continuity of $T$, we have $T(P)=1$, combine with de Saint Venant inequality given by lemma \ref{JKT}, we know that $|P|\geq c_{0}>0$, which implies that $P$ is non-degenerate. Now, by Lemma \ref{Pcvg},  there is $\gamma (P)=\lim_{i\rightarrow \infty}\gamma(P_{i})=o$, and
by virtue of the definition of $\Phi_{P}$, there is
\[
\Phi_{P}(o)=\lim_{i\rightarrow\infty}\Phi_{P_{i}}(o)=\inf\left\{\max_{\gamma\in Q}\Phi_{Q}(\gamma):Q\in \mathcal{P}(v_{1},\ldots,v_{N}),T(Q)=1\right\}.
\]

Secondly, we are prepared to prove that $P$ has exactly $N$ facets. By contradiction, there exists $i_{0}\in \{1,\ldots,N\}$ such that
$F(P,u_{i_{0}})=P\cap H(P,u_{i_{0}})$ is not the facet of $P$. By virtue of ~\cite[Section 2.4]{S14} and the same arguments of \cite[Lemma 5.1]{Xiong19}, choose sufficiently small $\delta>0$, let the polytope be
\[
P_{\delta}=P\cap \{ \gamma: \gamma\cdot v_{i_{0}}\leq h(P,v_{i_{0}})-\delta\}\in \mathcal{P}(v_{1},\ldots,v_{N}),
\]
and satisfy
\[
\alpha P_{\delta}=\alpha(\delta)P_{\delta}=T(P_{\delta})^{-\frac{1}{n+2}}P_{\delta}.
\]
Then, $T(\alpha P_{\delta})=1$ and as $\delta\rightarrow 0^{+}$, $\alpha P_{\delta}\rightarrow P$. Moreover, by Lemma \ref{Pcvg}, we get
\[
\gamma_{p}(P_{\delta})\rightarrow \gamma_{p}(P)=o\in Int (P),\ as\ \delta \rightarrow 0^{+}.
\]
Hence, for sufficiently small $\delta> 0$, we take
\[
\gamma (P_{\delta})\in Int(P)
\]
and
\[
h(P,v_{k})> \gamma(P_{\delta})\cdot v_{k}+\delta,\ for \ k\in \{ 1,\ldots, N\}.
\]
Next, we are devoted to showing that
\begin{equation}\label{ctdo}
\Phi_{\alpha P_{\delta}}(\gamma (\alpha P_{\delta}))< \Phi_{P}(\gamma(P))=\Phi(o).
\end{equation}
In view of the fact that $\gamma(\alpha P_{\delta})=\alpha \gamma(P_{\delta})$, it follows that
\begin{align*}\label{}
&\Phi_{\alpha P_{\delta}}(\gamma(\alpha P_{\delta}))\notag\\
&=\sum_{k=1}^{N}\beta_{k}\log(h(\alpha P_{\delta},v_{k})-\gamma(\alpha P_{\delta})\cdot v_{k})\notag\\
&=\log\alpha\sum_{k=1}^{N}\beta_{k}+\sum_{k=1}^{N}\beta_{k}\log(h(P_{\delta},v_{k})-\gamma(P_{\delta})\cdot v_{k})\notag\\
&=\log\alpha\sum_{k=1}^{N}\beta_{k}+\sum_{k=1}^{N}\beta_{k}\log(h(P,v_{k})-\gamma(P_{\delta})\cdot v_{k})-\beta_{i_{0}}\log(h(P,v_{i_{0}})-\gamma(P_{\delta})\cdot v_{i_{0}})\notag\\
&\quad+\beta_{i_{0}}\log(h(P,v_{i_{0}})-\gamma(P_{\delta})\cdot v_{i_{0}}-\delta)\notag\\
&=\Phi_{P}(\gamma(P_{\delta}))+H(\delta),
\end{align*}
where
\begin{align}\label{Heq}
H(\delta)&=\log\alpha\sum_{k=1}^{N}\beta_{k}-\beta_{i_{0}}\log(h(P,v_{i_{0}})-\gamma(P_{\delta})\cdot v_{i_{0}})\notag\\
&\quad+\beta_{i_{0}}\log(h(P,v_{i_{0}})-\gamma(P_{\delta})\cdot v_{i_{0}}-\delta).
\end{align}
Now, we are desired to get $H(\delta)< 0$ to admit ~\eqref{ctdo}. Let  $q_{0}$ be the diameter of $P$, then
\[
0<h(P,u_{i_{0}})-\gamma(P_{\delta})\cdot u_{i_{0}}-\delta<h(P,u_{i_{0}})-\gamma(P_{\delta})\cdot u_{i_{0}}< q_{0}.
\]
Thus,  by the concavity of $\log t$ in $[0,\infty)$, we get
\[
\log(h(P,u_{i_{0}})-\gamma (P_{\delta})\cdot u_{i_{0}}-\delta)-\log(h (P,u_{i_{0}})-\gamma (P_{\delta})\cdot u_{i_{0}})<\log(q_{0}-\delta)-\log q_{0}.
\]
Hence, together with ~\eqref{Heq}, we have
\begin{align*}
H(\delta)<M(\delta),
\end{align*}
where
\[
M(\delta)=-\frac{1}{n+2}\log T(P_{\delta})\left(\sum_{k=1}^{N}\beta_{k}\right)+\beta_{i_{0}}(\log(q_{0}-\delta)-\log q_{0}).\]
Now, by exploiting the Hadamard variational formula of torsional rigidity reflected by Lemma \ref{argu2}, we get
\begin{align}\label{Hlim}
M^{'}(\delta)&=-\frac{1}{n+2}\left(\sum^{N}_{k=1}\beta_{k} \right)\frac{1}{T(P_{\delta})}\frac{d T(P_{\delta})}{d \delta}-\frac{\beta_{i_{0}}}{q_{0}-\delta}\notag\\
&=-\frac{1}{n+2}\left(\sum^{N}_{k=1}\beta_{k} \right)\frac{1}{T(P_{\delta})}\sum^{N}_{k=1}h^{'}(P_{\delta},v_{k})\mu^{tor}(P_{\delta},\{v_{k}\})-\frac{\beta_{i_{0}}}{q_{0}-\delta}.
\end{align}
Suppose $\mu^{tor}(P,\{v_{k}\})\neq 0$ for some $k\in \{1,\ldots, N\}$. By the absolute continuity of $\mu^{tor}(P,\cdot)$ with regard to $S(P,\cdot)$, we know $S(P,\{v_{k}\})\neq 0$. As a result, we conclude  that $P$ has a facet with normal vector $v_{k}$, and by the definition of $P_{\delta}$, for sufficiently small $\delta> 0$, we have $h(P_{\delta},v_{k})=h({P},v_{k})$, this illustrates that $h^{'}(v_{k},0^{+})=0$, where
\[
h^{'}(v_{k},0^{+})=\lim_{\delta\rightarrow 0^{+}}\frac{h(P_{\delta},v_{k})-h({P},v_{k})}{\delta}.
\]
This together with $P_{\delta}\rightarrow P$ as $\delta \rightarrow 0^{+}$, we have
\begin{equation*}\label{ulim}
\sum^{N}_{k=1}h^{'}(P_{\delta},v_{k})\mu^{tor}(P_{\delta},\{v_{k}\})\rightarrow \sum^{N}_{k=1}h^{'}(v_{k},0^{+})\mu^{tor}(P,\{v_{k}\})=0, \ as \ \delta\rightarrow 0^{+},
\end{equation*}
which tells that, for sufficiently small $\delta$,
\begin{equation}\label{alim}
M^{'}(\delta)< 0.
\end{equation}
Since $M(0)=0$, for sufficiently small $\delta> 0$, we know $M(\delta)<0$,  it directly yields $H(\delta)< 0$. So, there exists a $\delta_{0}> 0$ such that $P_{\delta_{0}}\in \mathcal{P}(v_{1},\ldots,v_{N})$ and
\[
\Phi_{\alpha_{0}P_{\delta_{0}}}(\gamma(\alpha_{0}P_{\delta_{0}}))<\Phi_{P}(\gamma(P_{\delta_{0}}))\leq\Phi_{P}(\gamma(P))
=\Phi_{P}(o),
\]
where $\alpha_{0}=T(P_{\delta_{0}})^{-\frac{1}{n+2}}$. Set $P_{0}=\alpha_{0}P_{\delta_{0}}-\gamma(\alpha_{0}P_{\delta_{0}})$, for $P_{0}\in \mathcal{P}(v_{1},\ldots,v_{N})$, then we get
\[
T(P_{0})=1,\ \gamma(P_{0})=o,\ \Phi_{P_{0}}(o)< \Phi_{P}(o).
\]
This is a contradiction. So, $P$ has exactly $N$ facets.
\end{proof}

\section{Dealing with the discrete torsion log-Minkowski problem}
\label{Sec4}

In this section, we first attack the discrete torsion log-Minkowski problem.
\begin{theo}\label{maint2}
 Let $\beta_{1},\ldots,\beta_{N}\in (0,\infty)$, and the unit vectors $v_{1},\ldots,v_{N}$ $(N\geq n+1)$ are in general position in dimension $n$. If there exists a polytope $P\in \mathcal{P}(u_{1},\ldots,u_{N})$ satisfying $\gamma(P)=o$ and $T(P)=1$ such that
\begin{equation*}\label{}
\Phi_{P}(o)=\inf\left\{\max_{\gamma\in Q}\Phi_{Q}(\gamma):Q\in \mathcal{P}(v_{1},\ldots,v_{N}),T(Q)=1\right\}.
\end{equation*}
Then there exists a polytope $P_{0}$ such that
\[
h(P_{0},\cdot)\mu^{tor}(P_{0},\cdot)=\sum_{k=1}^{N}\beta_{k}\delta_{v_{k}}(\cdot).
\]
\end{theo}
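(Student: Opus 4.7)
The plan is to identify the conclusion as the Euler--Lagrange equation of the variational problem solved in Lemma~\ref{mexist}. Since the minimizer $P$ has exactly $N$ facets with normals $v_1,\ldots,v_N$, the support values $h_k^{\star}:=h(P,v_k)>0$ form independent coordinates in a neighborhood of $P$: for $\xi=(\xi_1,\ldots,\xi_N)\in\mathbb{R}^{N}$ and $|t|$ small, the polytope
\[
P_t:=\bigcap_{k=1}^{N}\set{x\in\rnnn:x\cdot v_k\le h_k^{\star}+t\xi_k}
\]
still belongs to $\mathcal{P}(v_1,\ldots,v_N)$ and has $v_k$ as a genuine facet normal with support value $h_k^{\star}+t\xi_k$. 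By the absolute continuity of $\mu^{tor}(P,\cdot)$ with respect to $S(P,\cdot)$ (Lemma~\ref{meau1}(c)) and the presence of all $N$ facets, $\mu^{tor}(P,\{v_k\})>0$ for every $k$.

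The two first variations required are that of $T(P_t)$ and that of the objective $\Phi_{P_t}(\gamma(P_t))$. The former is a direct application of Lemma~\ref{argu2}:
\[
\frac{d}{dt}T(P_t)\Big|_{t=0}=\sum_{k=1}^{N}\xi_k\,\mu^{tor}(P,\{v_k\}).
\]
For the latter, write $\gamma_t:=\gamma(P_t)$. By the strict concavity of $\Phi_{P_t}$ (Lemma~\ref{INT}) and the implicit function theorem, $t\mapsto\gamma_t$ is differentiable with $\gamma_0=o$. The envelope identity $\tfrac{d}{dt}\Phi_{P_t}(\gamma_t)=\partial_t\Phi_{P_t}(\gamma_t)+\nabla_\gamma\Phi_{P_t}(\gamma_t)\cdot\dot\gamma_t$, together with the first-order interior optimality $\nabla_\gamma\Phi_{P_t}(\gamma_t)=0$, then yields
\[
\frac{d}{dt}\Phi_{P_t}(\gamma_t)\Big|_{t=0}=\sum_{k=1}^{N}\frac{\beta_k}{h_k^{\star}}\xi_k.
\]

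Since $P$ minimizes $\Phi_{Q}(\gamma(Q))$ subject to the single scalar constraint $T(Q)=1$, the Lagrange multiplier rule produces a $\lambda\in\mathbb{R}$ with
\[
\frac{\beta_k}{h_k^{\star}}=\lambda\,\mu^{tor}(P,\{v_k\}),\qquad k=1,\ldots,N.
\]
Multiplying by $h_k^{\star}$, summing over $k$, and using the identity $T(P)=G^{tor}(P,\sn)=\tfrac{1}{n+2}\sum_k h_k^{\star}\mu^{tor}(P,\{v_k\})=1$ recorded after Lemma~\ref{Con}, one obtains $\lambda=|\mu|/(n+2)>0$. Finally, set $P_0:=\lambda^{1/(n+2)}P$. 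By the degree-$1$ homogeneity of $h$ and the degree-$(n+1)$ homogeneity of $\mu^{tor}$ (Lemma~\ref{meau1}(a)),
\[
h(P_0,v_k)\,\mu^{tor}(P_0,\{v_k\})=\lambda\,h_k^{\star}\,\mu^{tor}(P,\{v_k\})=\beta_k,
\]
and since $\mu^{tor}(P_0,\cdot)$ is supported on the facet normals of $P_0$, which lie in $\{v_1,\ldots,v_N\}$, the stated identity $h(P_0,\cdot)\,\mu^{tor}(P_0,\cdot)=\sum_k\beta_k\delta_{v_k}$ follows.

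The main technical obstacle is the envelope-theorem step: one must verify that $\gamma_t$ is indeed smooth in $t$ and that the vanishing-gradient condition at $\gamma(P)=o$ cleanly cancels the contribution of the moving maximizer. Both facts follow from the strict concavity of $\Phi_{P_t}$ (Lemma~\ref{INT}, which yields a non-degenerate Hessian in $\gamma$) together with the inequality $h(P_t,v_k)>\gamma_t\cdot v_k$ valid for all $k$ and small $t$ (since $P$ has $N$ facets and interior maximizer $o$); after that, the derivation is purely algebraic.
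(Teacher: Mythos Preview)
Your argument is correct and follows essentially the same route as the paper: both proofs perturb the support values of $P$, use Lemma~\ref{argu2} for the variation of $T$, invoke the implicit function theorem to justify differentiability of $t\mapsto\gamma_t$ (via the positive-definite Hessian arising from strict concavity in Lemma~\ref{INT}), eliminate the $\dot\gamma_t$ contribution through the interior first-order condition $\nabla_\gamma\Phi=0$, and then rescale by $\bigl(\tfrac{1}{n+2}\sum_i\beta_i\bigr)^{1/(n+2)}$. The only cosmetic difference is that the paper normalizes the competitor by $\alpha(t)=T(P_t)^{-1/(n+2)}$ and differentiates the composite functional directly, whereas you separate the objective and constraint and phrase the result as a Lagrange multiplier identity; the two formulations are equivalent and yield the same constant $\lambda=|\mu|/(n+2)$.

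One small correction: your early assertion that $\mu^{tor}(P,\{v_k\})>0$ follows from the absolute continuity in Lemma~\ref{meau1}(c) is backwards---absolute continuity of $\mu^{tor}$ with respect to $S$ only gives $S(E)=0\Rightarrow\mu^{tor}(E)=0$, not the converse. The positivity is in fact a \emph{consequence} of your Euler--Lagrange equation (since $\beta_k>0$), or can be obtained independently from the Hopf lemma on each facet; in any case you never use this claim in the derivation, so the argument is unaffected.
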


\begin{proof}

For $\delta_{1},\ldots,\delta_{N}\in {\R}$, sufficiently small $|t|>0$, define the polytope $P_{t}$ as

\begin{equation*}\label{}
P_{t}=\bigcap_{j=1}^{N}\{x: x\cdot v_{j}\leq h(P,v_{j})+t\delta_{j},\ j=1,\ldots,N\},
\end{equation*}
and satisfy
\begin{equation*}\label{}
\alpha(t)P_{t}=T(P_{t})^{-\frac{1}{n+2}}P_{t}.
\end{equation*}
Thus, from the part (i) of Lemma \ref{T1}, it is to see that $T(\alpha(t)P_{t})=1$, $\alpha(t)P_{t}\in \mathcal{P}_{N}(u_{1},\ldots,u_{N})$ and $\alpha(t)P_{t}\rightarrow P$ as $t\rightarrow 0$. Meanwhile, employ$~\eqref{extre4}$, let $\gamma(t)=\gamma_{p}(\alpha(t)P_{t})$ and
\begin{align}\label{pthmax6}
\Phi(t)&=\max_{\gamma\in \alpha(t)P_{t}}\sum_{k=1}^{N}\beta_{k}\log(\alpha(t)h(P_{t},v_{k})-\gamma\cdot v_{k})\notag\\
&=\sum_{k=1}^{N}\beta_{k}\log(\alpha(t)h(P_{t},v_{k})-\gamma(t)\cdot v_{k}).
\end{align}
By  ~\eqref{pthmax6}, Lemma \ref{INT}, and the fact is that $\gamma(t)$ is an interior point of $\alpha(t)P(t)$, then we get
\begin{equation*}\label{gam}
\frac{\partial \Phi(t)}{\partial \gamma({t})}=0.
\end{equation*}
Consequently, we have
\begin{equation}\label{gamva}
\sum_{k=1}^{N}\frac{\beta_{k}{v_{k,i}}}{\alpha(t)h(P_{t},v_{k})-\gamma(t)\cdot v_{k}}=0,
\end{equation}
for $i=1,\ldots,n$, $\gamma=(\gamma_{1},\ldots,\gamma_{n})^{T}$,  and $v_{k}=(v_{k,1},\ldots,v_{k,n})^{T}$.

By using ~\eqref{gamva}, let $t=0$ with $\alpha(0)=1, \gamma(0)=o$, then we get
\begin{equation}\label{t0equ}
\sum_{k=1}^{N}\frac{\beta_{k}v_{k,i}}{h(P,v_{k})}=0,\quad i=1,\ldots,n.
\end{equation}
Next, we reveal that $\gamma^{'}(t)\big|_{t=0}$ exists.

Let
\begin{equation*}\label{}
F_{i}(t,\gamma_{1},\ldots,\gamma_{n})=\sum_{k=1}^{N}\frac{\beta_{k}v_{k,i}}{\alpha(t)h(P_{t},v_{k})-(\gamma_{1}v_{k,1}+\ldots+\gamma_{n}v_{k,n})}
\end{equation*}
for $i=1,\ldots,n$.

Then, we have
\begin{equation*}\label{}
\frac{\partial F_{i}}{\partial \gamma_{j}}\Big|_{(0,\ldots,0)}=\sum_{k=1}^{N}\frac{\beta_{k}v_{k,i}v_{k,j}}{h(P,v_{k})^{2}}
\end{equation*}
for $j=1,\ldots, n$. Hence
\begin{equation*}\label{}
\left(\frac{\partial F}{\partial \gamma}\Big|_{(0,\ldots,0)}\right)_{n\times n}=\sum_{k=1}^{N}\frac{\beta_{k}}{h(P,u_{k})^{2}}v_{k}v_{k}^{T},
\end{equation*}
where $v_{k}v_{k}^{T}$ is an $n\times n$ matrix.

On the one hand, since $v_{1},\ldots,v_{N}$ are not concentrated on any closed hemisphere, for any $x\in{\rnnn}$, $x\neq \{0\}$, there exists $v_{i_{0}}\in \{{v_{1},\ldots,v_{N}}\}$ to meet $v_{i_{0}}\cdot x\neq 0$, and obtain
\begin{align*}\label{}
&x^{T}\left(\sum_{k=1}^{N}\frac{\beta_{k}}{h(P,v_{k})^{2}}v_{k}v_{k}^{T}\right)x\notag\\
&=\sum_{k=1}^{N}\frac{\beta_{k}(x\cdot v_{k})^{2}}{h(P,v_{k})^{2}}\notag\\
&\geq \frac{\beta_{i_{0}}(x\cdot v_{i_{0}})^{2}}{h(P,v_{i_{0}})^{2}}>0,
\end{align*}
which shows that $\frac{\partial F}{\partial \gamma}\big|(0,\ldots,0)$ is positively definite. From the inverse function theorem, we assert that $\gamma^{'}(0)=(\gamma_{1}^{'}(0),\ldots,\gamma_{n}^{'}(0))$ exists.

On the other hand, since $\Phi(0)$ attains the minimum value of $\Phi(t)$, by using \eqref{t0equ}, we obtain
\begin{align*}\label{}
0&=\frac{d\Phi(t)}{dt}\Bigg|_{t=0}\notag\\
&=\sum_{k=1}^{N}\beta_{k}h(P,v_{k})^{-1}\left[h(P,v_{k})\left(-\frac{1}{n+2}\right)\frac{d T(P_{t})}{dt}\Bigg|_{t=0}+\delta_{k}-\gamma^{'}(0)\cdot v_{k}\right]\notag\\
&=\sum_{k=1}^{N}\beta_{k}h(P,v_{k})^{-1}\left[-\frac{1}{n+2}h(P,v_{k})\left(\sum_{i=1}^{N}\delta_{i}\mu^{tor}(P,\{v_{i}\})\right)+\delta_{k}\right]-\gamma^{'}(0)\cdot\left[\sum_{k=1}^{N}\beta_{k}h(P,v_{k})^{-1}v_{k}\right]\notag\\
&=\sum_{k=1}^{N}\beta_{k}h(P,v_{k})^{-1}\left[-\frac{1}{n+2}h(P,v_{k})\left(\sum_{i=1}^{N}\delta_{i}\mu^{tor}(P,\{v_{i}\})\right)+\delta_{k}\right]\notag\\
&=\sum_{k=1}^{N}\delta_{k}\left[\beta_{k}h(P,v_{k})^{-1}-\frac{1}{n+2}\left(\sum_{i=1}^{N}\beta_{i}\right)\mu^{tor}(P,\{v_{k}\})\right].
\end{align*}
Since $\delta_{1},\ldots,\delta_{N}$ are arbitrary, we have
\begin{equation*}\label{}
\beta_{k}h(P,v_{k})^{-1}=\frac{1}{n+2}\left(\sum_{i=1}^{N}\beta_{i}\right)\mu^{tor}(P,\{v_{k}\})
\end{equation*}
for all $k=1,\ldots,N$. In view of the fact that $P$ is $n$-dimensional and $o\in Int(P)$, as a result, $h(P,v_{k})>0$, therefore,
\begin{equation}\label{Icon}
\beta_{k}=h(P,v_{k})\frac{1}{n+2}\left(\sum_{i=1}^{N}\beta_{i}\right)\mu^{tor}(P,\{v_{k}\}).
\end{equation}
Let $c=\frac{1}{n+2}\left(\sum_{i=1}^{N}\beta_{i}\right)$, apply ~\eqref{bordef} into ~\eqref{Icon}, we have
\begin{equation}\label{Icon2}
ch(P,\cdot)d\mu^{tor}(P,\cdot)=d\mu.
\end{equation}
Moreover, we have
\begin{equation*}\label{}
ch(P,\cdot)d\mu^{tor}(P,\cdot)=d\mu.
\end{equation*}
For $m>0$, $P\in \mathcal{P}(v_{1},\ldots,v_{N})$, by the part (a) of Lemma \ref{meau1}, we get
\begin{equation*}\label{}
d\mu^{tor}_{0}(mP,\cdot)=m^{n+2}h(P,\cdot)d\mu^{tor}(P,\cdot).
\end{equation*}
Then, let $P_{0}=\left[\frac{1}{n+2}(\sum_{i=1}^{N}\beta_{i})\right]^{\frac{1}{n+2}}P$, we get
\begin{equation*}\label{}
d\mu^{tor}_{0}(P_{0},\cdot)=d\mu,
\end{equation*}
which verifies that
\[
\mu^{tor}_{0}(P_{0},\cdot)=\sum_{k=1}^{N}\beta_{k}\delta_{v_{k}}(\cdot).
\]
This illustrates that $P_{0}$ is the desired polytope, which completes the proof.
\end{proof}

Applying Lemma \ref{mexist} and Lemma \ref{maint2}, we obtain the existence of solution to the discrete torsion log-Minkowski problem.
\begin{theo}\label{main21}
Let $\mu$ be a discrete measure on $\rnnn$ whose support set is not contained in any closed hemisphere and is in general position in dimension $n$. Then there exists a polytope $P$ containing the origin in its interior such that
\[
G^{tor}(P,\cdot)=\mu.
\]

\end{theo}

\section{The general torsion log-Minkowski problem}
\label{Sec5}
In this section, our aim is to deal with the general torsion log-Minkowski problem by virtue of the approximation technique.

Given a finite Borel measure $\mu$ on the unit sphere ${\sn}$, not concentrated on any closed hemisphere, we first construct a sequence of discrete measures whose support sets are in general position such that sequence of discrete measures converges to $\mu$ weakly.

For each positive integer $j$, divide $\sn$ into a finite disjoint union
\[
\sn=\bigcup^{N_{j}}_{i=1}U_{i,j}
\]
for $N_{j}>0$, which satisfies that the diameter of $U_{i,j}$ is less than $\frac{1}{j}$, and $U_{i,j}$ contains nonempty interior. Then, we can choose $v_{i,j}\in U_{i,j}$ such that $v_{1,j},\ldots, v_{N_{j},j}$ are in general position. As $j\rightarrow \infty$, we know that the vector $v_{1,j},\ldots, v_{N_{j},j}$ can not contained in any closed hemisphere.

We are in a position to define the discrete measure $\mu_{j}$ on $\sn$ as
\[
\mu_{j}=\sum^{N_{j}}_{i=1}\left( \mu(U_{i,j})+\frac{1}{N^{2}_{j}} \right)\delta_{v_{i,j}},
\]
and define
\begin{equation}\label{UJ}
\bar{\mu}_{j}=\frac{|\mu|}{|\mu_{j}|}\mu_{j}.
\end{equation}

 From the above statement, we see that $\bar{\mu}_{j}$ is a discrete measure on $\sn$ satisfying the conditions in Theorem \ref{maint2}, and $\bar{\mu}_{j} \rightharpoonup \mu$ weakly. Consequently, by Theorem \ref{maint2}, there is a polytope $\tilde{P}_{j}$  such that
\begin{equation}\label{ajlim}
h(\tilde{P}_{j},\cdot)\mu^{tor}(\tilde{P}_{j},\cdot)=\bar{\mu}_{j},
\end{equation}
 and $\tilde{P}_{j}$ is a rescaled of $P_{j}$, i.e.,
  \begin{equation}\label{PPJ}
\tilde{P}_{j}=\left(\frac{1}{n+2}|\bar{\mu}_{j}| \right)^{\frac{1}{n+2}}P_{j},
\end{equation}
  where $P_{j}$ satisfies $P_{j}\in P(v_{1,j},\ldots,v_{N_{j},j})$, $\gamma_{j}(P_{j})=o$, $T(P_{j})=1$, and solves
\begin{equation}\label{LP}
\Phi_{P_{j}}(o)=\inf\left\{\max_{\gamma_{j}\in Int (Q_{j})}\Phi_{Q_{j}}(\gamma_{j}):Q_{j}\in \mathcal{P}(v_{1,j},\ldots,v_{N_{j},j}),T(Q_{j})=1\right\}.
\end{equation}

Now, we introduce the following key lemma, which shall be used in the below.
\begin{lem}\label{N1}
Let $v_{1,j},\ldots,v_{N_{j},j}\in \sn$ be as given above. Set
\begin{equation}\label{SJ}
\mathcal{S}_{j}=\sum^{N_{j}}_{j=1}\{x\in \rnnn: \ x\cdot v_{i,m}\leq 1 \}.
\end{equation}
Then, for sufficiently large $j$, we get
\[
B^{n}\subset \mathcal{S}_{j} \subset 3  B^{n}.
\]
\end{lem}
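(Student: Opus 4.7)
The plan is to interpret $\mathcal{S}_j$ as the intersection $\bigcap_{i=1}^{N_j}\{x\in\rnnn: x\cdot v_{i,j}\leq 1\}$, i.e.\ the Wulff shape corresponding to the constant support function equal to $1$ at the normals $\{v_{i,j}\}_{i=1}^{N_j}$. Both inclusions will then hinge on a single structural fact: since each $U_{i,j}$ has diameter at most $1/j$ and the family $\{U_{i,j}\}$ covers $\sn$, the vectors $\{v_{i,j}\}_{i=1}^{N_j}$ form a $(1/j)$-net of $\sn$.

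For the lower inclusion $B^n\subset \mathcal{S}_j$, I would simply note that any $x$ with $|x|\leq 1$ satisfies $x\cdot v_{i,j}\leq |x|\,|v_{i,j}|\leq 1$ for every $i$ by Cauchy-Schwarz, so $x\in \mathcal{S}_j$. This inclusion actually holds for every $j$, not only for $j$ large.

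The substance is therefore in the outer inclusion. Given a nonzero $x\in \mathcal{S}_j$, set $v=x/|x|\in \sn$ and choose an index $i$ with $v\in U_{i,j}$, so $|v-v_{i,j}|< 1/j$. Combining $x\cdot v_{i,j}\leq 1$ with Cauchy-Schwarz gives
\begin{equation*}
|x| = x\cdot v = x\cdot v_{i,j} + x\cdot(v-v_{i,j}) \leq 1 + |x|\,|v-v_{i,j}| \leq 1 + \frac{|x|}{j},
\end{equation*}
which rearranges to $|x|\leq j/(j-1)$. For $j\geq 2$ this is already bounded by $2<3$, so $\mathcal{S}_j\subset 3B^n$ for all sufficiently large $j$. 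I do not anticipate any serious obstacle; the argument is the standard observation that the polar of a polytope whose facet-normals form a fine net of $\sn$ is Hausdorff-close to $B^n$, and the constant $3$ gives plenty of slack.
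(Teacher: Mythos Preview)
Your proposal is correct and follows essentially the same approach as the paper: both arguments use that the $\{v_{i,j}\}$ form a $1/j$-net of $\sn$ (via the partition $\{U_{i,j}\}$), pick for a given direction $u=x/|x|$ an index with $|u-v_{i,j}|<1/j$, and combine the defining inequality $x\cdot v_{i,j}\le 1$ with this closeness to bound $|x|$. The only cosmetic differences are that the paper phrases the outer bound through the radial function and the inner-product estimate $u\cdot v_{i_j,j}>1/3$, and omits the trivial inclusion $B^n\subset\mathcal{S}_j$ that you spell out.
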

\begin{proof}
 As indicated before, since $\{U_{i,j}\}_{j}$ is a partition of $\sn$, then for each $u\in \sn$, there exists $i_{j}$ such that $u\in U_{i_{j},j}$. Since $diam(U_{i,j})<\frac{1}{j}$. So, we can choose $N_{0}>0$ such that for each $j>N_{0}$,
 \[
 u\cdot v_{i_{j},j}>\frac{1}{3}.
 \]
Due to $\rho(\mathcal{S}_{j},u)u\in \mathcal{S}_{j}$, there is
\[
\rho(\mathcal{S}_{j},u)/3<\rho(\mathcal{S}_{j},u)u\cdot v_{i_{j},j}\leq 1.
\]
Consequently, $\rho(\mathcal{S}_{j},\cdot)< 3$ for each $j>N_{0}$, which gives the desired result.
\end{proof}
Without loss of generality, for $\gamma \in \Omega$, we write
\begin{equation}\label{DD2}
\Phi_{\Omega,\mu}(\gamma)=\int_{\sn}\log h(\Omega-\gamma,\cdot)d\mu.
\end{equation}
Note that, when $\mu$ is a discrete measure, and $\{v_{1},\ldots, v_{N}\}$ is the support of $\mu$, then \eqref{DD2} is precisely \eqref{bordef} given by in Sec. \ref{Sec3}.

By means of lemma \ref{N1}, we have the following result.

\begin{lem}\label{PES}
Let $\tilde{P}_{j}$ be as given in \eqref{ajlim}, and $\gamma_{j}(P_{j})$ is the minimizer to \eqref{LP} with $\gamma_{j}(P_{j})=o$. If $|u|$ is positive, then there exists a constant $c_{0}>0$, independent of $j$, such that
\begin{equation}\label{UYT}
\Phi_{\tilde{P}_{j},\bar{\mu}_{j}}(o)<c_{0}.
\end{equation}
\end{lem}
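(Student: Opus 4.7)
The plan is to absorb the rescaling \eqref{PPJ} into a harmless additive constant and then bound the reduced quantity $\Phi_{P_j}(o)$ by plugging a well-chosen competitor into the extremal problem \eqref{LP}. Since $\tilde P_j=(|\bar\mu_j|/(n+2))^{1/(n+2)}P_j$ and $|\bar\mu_j|=|\mu|$ by \eqref{UJ}, the logarithmic form of $\Phi$ splits as
\[
\Phi_{\tilde P_j,\bar\mu_j}(o)=\frac{|\mu|}{n+2}\log\frac{|\mu|}{n+2}+\Phi_{P_j}(o),
\]
so it suffices to show that $\Phi_{P_j}(o)=\int_{\sn}\log h(P_j,v)\,d\bar\mu_j(v)$ is bounded from above by a constant depending only on $|\mu|$ and $n$.

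Next, I will use that $P_j$ solves \eqref{LP}, which means $\Phi_{P_j}(o)\le\max_{\gamma\in Int(Q_j)}\Phi_{Q_j}(\gamma)$ for every admissible competitor $Q_j\in\mathcal P(v_{1,j},\ldots,v_{N_j,j})$ with $T(Q_j)=1$. The natural competitor is $Q_j:=T(\mathcal S_j)^{-1/(n+2)}\mathcal S_j$, with $\mathcal S_j$ as in \eqref{SJ}. Lemma \ref{N1} provides $B^n\subset\mathcal S_j\subset 3B^n$ for all sufficiently large $j$, and the monotonicity and homogeneity of $T$ from Lemma \ref{T1} then force $T(B^n)\le T(\mathcal S_j)\le 3^{n+2}T(B^n)$. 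Hence $Q_j$ sits inside a Euclidean ball of some $j$-independent radius $R$, contains the origin in its interior, belongs to $\mathcal P(v_{1,j},\ldots,v_{N_j,j})$ by construction, and satisfies $T(Q_j)=1$, so it is admissible in \eqref{LP}.

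Finally, for every $\gamma\in Int(Q_j)$ and every $v\in\sn$ the elementary identity $h(Q_j,v)-\gamma\cdot v=h(Q_j-\gamma,v)\le\DIAM(Q_j)\le 2R$ gives
\[
\max_{\gamma\in Int(Q_j)}\Phi_{Q_j}(\gamma)\le|\bar\mu_j|\log(2R)=|\mu|\log(2R),
\]
so combining this with the splitting above yields the claimed uniform constant $c_0$. The principal obstacle is producing a competitor whose shape (and hence torsional rigidity) can be controlled uniformly in $j$, despite the fact that both $N_j$ and the configuration of normals change with $j$; this is exactly what Lemma \ref{N1} handles, and once the uniform sandwiching $B^n\subset\mathcal S_j\subset 3B^n$ is in hand, the remainder of the argument reduces to the displayed calculation together with a single invocation of Lemma \ref{T1}.
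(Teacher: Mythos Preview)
Your proof is correct and follows essentially the same approach as the paper: both use a rescaling of the polytope $\mathcal S_j$ from \eqref{SJ} as the competitor in \eqref{LP}, invoke Lemma \ref{N1} to sandwich it between two fixed balls, and then bound the support function of the translated competitor by its diameter. Your explicit splitting of $\Phi_{\tilde P_j,\bar\mu_j}(o)$ into the rescaling constant plus $\Phi_{P_j,\bar\mu_j}(o)$ makes the final step slightly more transparent than the paper's reference to \eqref{PPJ} and \eqref{PPJ2}, but the underlying argument is identical.
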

\begin{proof}
Set $\mathcal{S}_{j}$ as defined in \eqref{SJ}. Using lemma \ref{N1}, we know that, for $r>0$ and for sufficiently large $j$, one see $rB^{n}\subset r \mathcal{S}_{j}\subset 3r B^{n}$. By virtue of the homogeneity of $T$, there exists $r_{0}(j)>0$ such that
\[
T(r_{0}(j)\mathcal{S}_{j})=1.
\]
Due to $r B\subset r \mathcal{S}_{j}$, there is
\[
r_{0}(j)^{n+2}T(B^{n})=T(r_{0}(j)B^{n})\leq T(r_{0}(j)\mathcal{S}_{j})=1
\]
Thus, $r_{0}(j)\leq r_{0}$ for some constant $r_{0}$ independent of $j$, and
\begin{equation}
\begin{split}
\label{PPJ2}
\Phi_{P_{j},\bar{\mu}_{j}}(o)&\leq \int_{\sn}\log h(r_{0}(j)\mathcal{S}_{j}-\gamma_{j} (r_{0}(j)\mathcal{S}_{j}),\cdot)d \bar{\mu}_{j}\\
&\leq \int_{\sn} \log h(6r_{0}B^{n},\cdot)d\bar{\mu}_{j}=\log(6r_{0})|\bar{\mu}_{j}|=\log(6r_{0})|\mu|.
\end{split}
\end{equation}
Hence, the desired bound \eqref{UYT} follows from $\eqref{PPJ}$ and \eqref{PPJ2}.
\end{proof}

As elaborated before, given a Borel measure $\mu$ on $\sn$, $\mu$ satisfies the \emph{subspace mass inequality} provided
\begin{equation}\label{space}
\frac{\mu(\xi_{k}\cap \sn)}{|\mu|}< \frac{k}{n}
\end{equation}
for each $\xi_{k}\in G_{n,k}$, and for each $k=1,\ldots,n-1$.
\begin{theo}\cite{LU23}
If $\mu$ is an even, finite, non-zero Borel measure on $\sn$ satisfying the following subspace mass inequality \eqref{space}, then there exists an origin-symmetric convex body $\Omega\subset \rnnn$ such that
\[
G^{tor}(\Omega,\cdot)=\mu.
\]
\end{theo}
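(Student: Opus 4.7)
The strategy is to mirror the discrete-to-general approximation scheme developed in Sec.~\ref{Sec5} for Theorem~\ref{main22}, but carried out entirely in the origin-symmetric category, with the subspace mass inequality \eqref{space} replacing the general-position hypothesis that drove Corollary~\ref{GGH2}. First I would approximate $\mu$ by a sequence of even discrete measures $\mu_{j}$ on $\sn$, supported on centrally symmetric sets $\{\pm v_{i,j}\}_{i=1}^{N_{j}}$ whose half-systems are in general position, chosen so that $\mu_{j} \rightharpoonup \mu$ weakly, $|\mu_{j}|=|\mu|$, and the subspace mass inequality passes to $\mu_{j}$ for all sufficiently large $j$ (this last point uses a small mass regularization analogous to \eqref{UJ}, combined with strict inequality in \eqref{space}).

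For fixed $j$ I would solve the symmetric discrete extremal problem
\[
\inf\left\{\int_{\sn}\log h(Q,v)\,d\mu_{j}(v) : Q \in \mathcal{P}_{e}(\pm v_{1,j},\ldots,\pm v_{N_{j},j}),\ T(Q)=1\right\},
\]
where the subscript $e$ indicates the origin-symmetric subclass of $\mathcal{P}(\cdot)$. Since both $Q$ and $\mu_{j}$ are centrally symmetric, the inner maximization over translations $\gamma \in Q$ that drives Lemma~\ref{INT} is automatic at $\gamma=o$, and no centering argument is needed. A Lagrange multiplier computation in the spirit of Theorem~\ref{maint2}, built on the Hadamard variational formula (Lemma~\ref{argu2}), then shows that any minimizer $P_{j}$ satisfies $G^{tor}(P_{j},\cdot) = c_{j}\mu_{j}$ for a positive constant $c_{j}$, and the $(n+2)$-homogeneous rescaling $\tilde P_{j} = c_{j}^{1/(n+2)} P_{j}$ absorbs $c_{j}$ to yield $G^{tor}(\tilde P_{j},\cdot) = \mu_{j}$.

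The main obstacle is proving uniform a priori bounds on $\tilde P_{j}$ from the subspace mass inequality alone. Normalizing the torsion, use John's theorem to sandwich $\tilde P_{j}$ between concentric origin-symmetric ellipsoids with semi-axes $\lambda_{1}^{(j)} \geq \cdots \geq \lambda_{n}^{(j)}$, and suppose for contradiction that $\lambda_{1}^{(j)}/\lambda_{n}^{(j)} \to \infty$. Then for some $k\in\{1,\ldots,n-1\}$ the ratio $r_{j}=\lambda_{k}^{(j)}/\lambda_{k+1}^{(j)}$ is unbounded along a subsequence, so $\tilde P_{j}$ concentrates near a $k$-dimensional subspace $\xi_{k}^{(j)}$. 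Splitting $\int_{\sn}\log h(\tilde P_{j},v)\,d\mu_{j}(v)$ over a small spherical neighborhood of $\xi_{k}^{(j)}\cap\sn$ and its complement, the ``far'' part grows at rate $\bigl(|\mu_{j}|-\mu_{j}(\xi_{k}^{(j)}\cap\sn)\bigr)\log r_{j}+O(1)$ while the ``near'' part contributes $-\mu_{j}(\xi_{k}^{(j)}\cap\sn)\log r_{j}+O(1)$; combined with the torsion normalization, which via de Saint Venant's inequality (Lemma~\ref{JKT}) forces $\prod_{i}\lambda_{i}^{(j)}$ to stay bounded, the strict inequality $\mu_{j}(\xi_{k}^{(j)}\cap\sn) < (k/n)|\mu_{j}|$ yields that the functional blows up to $+\infty$. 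This contradicts the symmetric analogue of Lemma~\ref{PES}, which provides an upper bound on $\Phi_{\tilde P_{j},\mu_{j}}(o)$ by comparison with a fixed symmetric Wulff shape built from the $v_{i,j}$.

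With diameter and inradius controlled, the Blaschke selection theorem extracts a subsequential limit $\tilde P_{j_{m}} \to \Omega$ for some origin-symmetric convex body $\Omega$, nondegenerate by another application of Lemma~\ref{JKT}. The continuity statement in Lemma~\ref{meau1}(d), combined with continuity of the support function, transfers to the cone-torsion measure via \eqref{CTC}, giving $G^{tor}(\tilde P_{j_{m}},\cdot) \rightharpoonup G^{tor}(\Omega,\cdot)$; since the left side equals $\mu_{j_{m}} \rightharpoonup \mu$, we conclude $G^{tor}(\Omega,\cdot) = \mu$. The delicate part of the argument is the degeneration step: unlike the classical volume-based BLYZ treatment, the torsional functional does not factor cleanly across a subspace splitting, so one must use de Saint Venant together with the Hadamard variational formula to relate $T(\tilde P_{j})$ to the product of semi-axes with enough precision to exploit the strict inequality in \eqref{space}.
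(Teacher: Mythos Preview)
The statement you are proving is not given a proof in this paper; it is quoted from \cite{LU23} as prior work, and the paper's own contribution is the stronger Theorem~\ref{IUY}, which removes the evenness hypothesis. So there is no ``paper's own proof'' of this particular statement to compare against.

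That said, your plan is precisely the symmetric specialization of the argument the paper runs in Section~\ref{Sec5} for Theorem~\ref{IUY}. The paper's Lemmas~\ref{IU}, \ref{EE}, and \ref{PKL} carry out the ellipsoid-degeneration bound you sketch, but with a sharper tool than your informal near/far splitting: they use the spherical partition $\{A_{k,\delta}\}$ of \cite{BLYZ12} together with the Abel-summation Lemma~\ref{BL} to produce the explicit lower bound \eqref{EP}, which isolates a term $t_{0}\log a_{n,j}$ that blows up while the remaining volume term is controlled by de Saint Venant (Lemma~\ref{JKT}). Your simplifications in the even case are legitimate: with $\mu_{j}$ even and $Q$ origin-symmetric, the inner maximization of Lemma~\ref{INT} is indeed attained at $\gamma=o$, and the John ellipsoid is centered at the origin, so the translation bookkeeping in Lemmas~\ref{Pcvg} and \ref{mexist} drops out. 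One small point: the full set $\{\pm v_{i,j}\}$ is never in general position (each pair $v,-v$ is dependent), so Corollary~\ref{GGH2} is unavailable even for the discrete step; you must use the subspace-mass degeneration argument already at the level of each fixed $j$ to get a minimizer, not only for the uniform-in-$j$ bound. Since each $\mu_{j}$ inherits \eqref{space} (your analogue of Lemma~\ref{IU}), this is fine, but it should be said explicitly.
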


Our aim is to derive the above theorem without symmetric assumptions via an approximation argument. For this purpose, we need to the following preparation.
\begin{lem}\label{UY}
Let $\tilde{P}_{j}$ be as given in \eqref{ajlim}. Then there exists $c_{0}>0$ such that \[
T(\tilde{P}_{j})>c_{0}\]
for every $j$.
\end{lem}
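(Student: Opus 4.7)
The plan is essentially a one-line computation: I will show that $T(\tilde{P}_j) = |\mu|/(n+2)$ exactly for every $j$, after which the lemma follows by taking any $c_0$ strictly smaller than this constant (recall $|\mu|>0$ since $\mu$ is non-zero).

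My preferred route is via scaling. Formula \eqref{PPJ} writes $\tilde{P}_j = \lambda_j P_j$ with $\lambda_j := \bigl(|\bar{\mu}_j|/(n+2)\bigr)^{1/(n+2)}$, while the polytope $P_j$ is chosen in \eqref{LP} to satisfy $T(P_j)=1$. Invoking the degree-$(n+2)$ homogeneity of $T$ (part (i) of Lemma \ref{T1}) together with the normalization $|\bar{\mu}_j| = |\mu|$ built into the definition \eqref{UJ} of $\bar{\mu}_j$, I obtain
\[
T(\tilde{P}_j) \;=\; \lambda_j^{\,n+2}\,T(P_j) \;=\; \frac{|\bar{\mu}_j|}{n+2} \;=\; \frac{|\mu|}{n+2}.
\]
Hence setting, say, $c_0 := |\mu|/\bigl(2(n+2)\bigr)>0$ gives the required uniform lower bound.

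As a useful cross-check, the same value can be read off directly from \eqref{ajlim} without invoking \eqref{PPJ}. Integrating the identity $h(\tilde{P}_j,\cdot)\,d\mu^{tor}(\tilde{P}_j,\cdot) = d\bar{\mu}_j$ over $\sn$, and recognizing the left-hand integral via \eqref{CTC} and the remark following Lemma \ref{Con} as $(n+2)\,G^{tor}(\tilde{P}_j,\sn) = (n+2)\,T(\tilde{P}_j)$, yields $(n+2)\,T(\tilde{P}_j) = |\bar{\mu}_j| = |\mu|$, confirming the identity.

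There is essentially no obstacle here: the uniform value of $T(\tilde{P}_j)$ is hard-wired into the construction of $\tilde{P}_j$, so the lemma is a direct consequence of homogeneity and the choice of normalizing constants in \eqref{UJ} and \eqref{PPJ}. The statement is presumably isolated as a separate lemma only because it will be invoked when passing to a Hausdorff-convergent subsequence of the $\tilde{P}_j$ in the subsequent approximation argument, where one needs a uniform positive lower bound on torsional rigidity (and hence, by the de Saint Venant inequality \eqref{IS}, on volume) to rule out collapse of the limiting body.
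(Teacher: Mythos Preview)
Your proposal is correct, and your cross-check is exactly the paper's own argument: the paper computes $T(\tilde{P}_j)=G^{tor}(\tilde{P}_j,\sn)=\tfrac{1}{n+2}|\bar{\mu}_j|=\tfrac{1}{n+2}|\mu|$ directly from \eqref{ajlim} and the remark after Lemma \ref{Con}. Your primary route via the homogeneity of $T$ and the scaling \eqref{PPJ} is an equally valid one-line alternative; incidentally, your care in taking $c_0$ strictly smaller than $|\mu|/(n+2)$ is tidier than the paper, which sets $c_0:=|\mu|/(n+2)$ and then states a strict inequality that is in fact an equality.
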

\begin{proof}
For a convex body $\Omega$ in $\rnnn$, with the aid of the definition of $G^{tor}(\cdot)$ and $T(\cdot)$, one see
\begin{equation}\label{TTR}
T(\tilde{P}_{j})=|G^{tor}(\tilde{P}_{j},\sn)|=\frac{1}{n+2}|\bar{\mu}_{j}|=\frac{1}{n+2}|\mu|:=c_{0}>0.
\end{equation}
Thus, we get the desired result.
\end{proof}

Next, we shall prove that $P_{j}$ is uniformly bounded when $\mu$ (not necessarily even) satisfies the subspace mass inequality \eqref{space}. For convenience, we write
\[
\chi_{k}=\frac{k}{n}.
\]
For each $\omega\subset \sn$ and $\eta >0$, we define
\[
\Re_{\eta}(\omega)=\{v\in \sn: |v-u|< \eta, \ {\rm for \ some \ } u\in \omega\}.
\]

Motivated by \cite[Lemma 4.1]{CL19} or \cite[Lemma 5.10]{GXZ23}, we get a slightly stronger subspace mass inequality for sufficiently large $j$.
\begin{lem}\label{IU}
Let $\mu$ be a finite Borel measure on $\sn$ and $\bar{\mu}_{j}$ be established in \eqref{UJ}, if $\mu$ satisfies the subspace mass inequality \eqref{space}, then there exist $\tilde{\chi}_{k}\in (0,\chi_{k})$, $N_{0}>0$, and $\eta_{0}\in (0,1)$ such that for all $j>N_{0}$,
\begin{equation}\label{kkk}
\frac{\bar{\mu}_{j}(\Re_{\eta_{0}}(\xi_{k}\cap \sn))}{|\mu|}<\tilde{\chi}_{k}
\end{equation}
for each $k$-dimensional subspace $\xi_{k}\subset \rnnn$ and $k=1,\ldots,n-1$.
\end{lem}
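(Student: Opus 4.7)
The plan is two-step: first derive from the strict subspace mass inequality for $\mu$ a uniformly strict version that persists under a small thickening of $\xi_k\cap\sn$, and then transfer this uniform strict inequality to the approximating measures $\bar{\mu}_j$ using the weak convergence $\bar{\mu}_j\rightharpoonup \mu$.

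For the first step I would exploit compactness of the Grassmannian $G_{n,k}$. The map $\xi_k\mapsto \mu(\xi_k\cap\sn)$ is upper semi-continuous on $G_{n,k}$: if $\xi_k^{(m)}\to\xi_k$ in $G_{n,k}$, then for any $\eta>0$ one eventually has $\xi_k^{(m)}\cap\sn\subset \Re_\eta(\xi_k\cap\sn)$, and continuity from above of the finite measure $\mu$ gives $\mu(\Re_\eta(\xi_k\cap\sn))\downarrow \mu(\xi_k\cap\sn)$ as $\eta\downarrow 0$. Hence the supremum
\[
\chi_k^{*}:=\sup_{\xi_k\in G_{n,k}}\frac{\mu(\xi_k\cap\sn)}{|\mu|}
\]
is attained and $\chi_k^{*}<\chi_k$ by \eqref{space}. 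Set $\tilde{\chi}_k:=(\chi_k^{*}+\chi_k)/2$. For each fixed $\xi_k$ the monotone convergence above produces some $\eta(\xi_k)>0$ with $\mu(\Re_{\eta(\xi_k)}(\xi_k\cap\sn))<\tilde{\chi}_k|\mu|$. If $\xi_k'$ is close enough to $\xi_k$ in $G_{n,k}$, then $\Re_{\eta(\xi_k)/2}(\xi_k'\cap\sn)\subset \Re_{\eta(\xi_k)}(\xi_k\cap\sn)$; by compactness finitely many such neighborhoods cover $G_{n,k}$, and taking $\eta_0$ to be the minimum of the corresponding halved radii (for each $k=1,\ldots,n-1$) yields
\[
\mu(\Re_{\eta_0}(\xi_k\cap\sn))<\tilde{\chi}_k|\mu|\qquad\text{uniformly in }\xi_k\in G_{n,k}.
\]

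For the second step, note that $\bar{\mu}_j\rightharpoonup\mu$ weakly follows from construction \eqref{UJ} together with $\DIAM(U_{i,j})<1/j$ and $|\bar{\mu}_j|=|\mu|$. Pick an auxiliary radius $\eta_0'\in(0,\eta_0)$ so that $\overline{\Re_{\eta_0'}(\xi_k\cap\sn)}\subset \Re_{\eta_0}(\xi_k\cap\sn)$. Portmanteau's theorem applied to this closed set, combined with the bound from the first step, gives
\[
\limsup_{j\to\infty}\bar{\mu}_j\bigl(\overline{\Re_{\eta_0'}(\xi_k\cap\sn)}\bigr)\leq \mu\bigl(\overline{\Re_{\eta_0'}(\xi_k\cap\sn)}\bigr)\leq \mu(\Re_{\eta_0}(\xi_k\cap\sn))<\tilde{\chi}_k|\mu|
\]
for each fixed $\xi_k$. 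A final compactness argument on $G_{n,k}$ (using that $\Re_{\eta_0''}(\xi_k'\cap\sn)\subset \overline{\Re_{\eta_0'}(\xi_k\cap\sn)}$ whenever $\xi_k'$ is sufficiently near $\xi_k$ and $\eta_0''<\eta_0'$ is small enough) promotes this pointwise $\limsup$-control to uniform control, furnishing a single $N_0$ such that \eqref{kkk} holds with thickening radius $\eta_0''$ for every $j>N_0$ and every $\xi_k\in G_{n,k}$.

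The main obstacle is precisely the simultaneous uniformity in $\xi_k$ and $j$: the individual estimates are immediate from monotone convergence of a finite measure and from Portmanteau, but both the thickening radius in the final statement and the threshold $N_0$ must be chosen independently of the continuous parameter $\xi_k\in G_{n,k}$. This is exactly the interplay between the pointwise nature of weak convergence and the Grassmannian that the covering-compactness arguments of \cite{CL19,GXZ23} are designed to handle, and my plan is to mirror their template to conclude.
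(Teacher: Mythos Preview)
Your direct approach is correct but differs from the paper's, which argues by contradiction. The paper assumes the conclusion fails, extracts sequences $j_i\to\infty$, $\eta_i\to 0$, $\tilde{\chi}_k^{(i)}\uparrow\chi_k$, and subspaces $\xi_k^{(i)}$ violating \eqref{kkk}; a single compactness extraction on the Grassmannian produces a limiting $\xi_k$, and then Portmanteau on the closed set $\overline{\Re_\eta(\xi_k\cap\sn)}$ combined with $\eta\to 0$ gives $\mu(\xi_k\cap\sn)/|\mu|\geq\chi_k$, contradicting \eqref{space}.

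Your route requires two separate compactness/covering passes on $G_{n,k}$---one to obtain a uniform thickening radius for $\mu$ itself, and a second to make the Portmanteau threshold $N_0$ uniform in $\xi_k$---whereas the contradiction argument dispatches all three quantifiers ($\tilde{\chi}_k$, $\eta_0$, $N_0$) and the dependence on $\xi_k$ in a single sweep. The paper's proof is shorter and more in line with the template of \cite{CL19,GXZ23}; yours is more constructive and makes the origin of the constants $\tilde{\chi}_k$, $\eta_0$ explicit. Both are valid.
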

\begin{proof}
We take by contradiction argument. Suppose that \eqref{kkk} does not hold, then there exists a sequence of subspaces $\xi^{i}_{k}$ of $\xi_{k}$ such that $j_{i}\rightarrow \infty$, $\eta_{i}\rightarrow 0$, $\chi^{i}_{k}\rightarrow \chi_{k}$ as $i\rightarrow \infty$, and
\begin{equation}\label{kkk1}
\frac{\bar{\mu}_{j_{i}}(\Re_{\eta_{i}}(\xi^{i}_{k}\cap \sn))}{|\mu|}\geq\chi^{i}_{k}
\end{equation}
Now, let $\{e_{1,i},\ldots, e_{k,i}\}$ be an orthonormal basis of $\xi^{i}_{k}$. By choosing suitable subsequences, we may suppose that $e_{1,i}\rightarrow e_{1},\ldots, e_{k,i}\rightarrow e_{k}$ as $i\rightarrow \infty$. Let $\xi_{k}={\rm span} \{{e_{1},\ldots, e_{k}}\}$. Since $\eta_{i}\rightarrow 0$ as $i\rightarrow \infty$, given a small positive $\eta$,  we get
\begin{equation*}
\begin{split}
\label{qw}
\frac{\bar{\mu}_{j_{i}}(\overline{\Re_{\eta}(\xi_{k}\cap \sn)})}{|\mu|}\geq\frac{\bar{\mu}_{j_{i}}(\Re_{\eta_{i}}(\xi^{i}_{k}\cap \sn))}{|\mu|}\geq\chi^{i}_{k}
\end{split}
\end{equation*}
for large enough $i$. In view of the fact that $\bar{\mu}_{j_{i}}$ converges weakly to $\mu$,  $\overline{\Re_{\eta}(\xi_{k}\cap \sn)}$ is compact, and $\chi^{i}_{k}\rightarrow \chi_{k}$, as $i\rightarrow \infty$, then we obtain
\[
\frac{\mu(\overline{\Re_{\eta}(\xi_{k}\cap \sn)})}{|\mu|}\geq \chi_{k}.
\]
Since $\eta$ is arbitrary, let $\eta\rightarrow 0$, there is
\[
\frac{\mu(\xi_{k}\cap \sn)}{|\mu|}\geq \chi_{k}.
\]
This contradicts to \eqref{space}. Hence, we complete the proof.
\end{proof}

Based on lemma \ref{IU}, we will estimate the functional $\Phi_{E_{j},\bar{\mu}_{j}}(o)$, where $E_{j}$ is a sequence of centered ellipsoids. The key technique is to use an appropriate spherical partition. The general argument was introduced by \cite{BLYZ12}.

Let $e_{1},\ldots, e_{n}$ be an orthonormal basis in $\rnnn$. For each
$\delta\in (0,\frac{1}{\sqrt{n}})$, define the partion $\{A_{1,\delta},\ldots,A_{n,\delta}\}$ of $\sn$, with respect to $e_{1},\ldots,e_{n}$ by
\begin{equation}\label{ER1}
A_{k,\delta}=\{ v\in \sn: |\nu\cdot e_{k}|\geq \delta \ {\rm and} \ |v \cdot e_{j}|\leq \delta \ {\rm for \ all }\ j>k\},\ k=1,\ldots,n.
\end{equation}

Let
\[
\xi_{k}={\rm span}\{e_{1},\ldots,e_{k}\},\ k=1,\ldots,n,
\]
and $\xi_{0}=\{0\}$. It was shown in \cite{BLYZ12} that for any non-zero finite Borel measure $\mu$ on $\sn$,
\begin{equation*}\label{STE}
\lim_{\delta\rightarrow 0^{+}}\mu(A_{k,\delta})=\mu((\xi_{k}\backslash \xi_{k-1})\cap \sn),
\end{equation*}
and therefore,
\begin{equation*}\label{ST2}
\lim_{\delta\rightarrow 0^{+}}(\mu(A_{1,\delta})+\ldots+\mu(A_{k,\delta}))=\mu(\xi_{k}\cap \sn).
\end{equation*}
The following lemma from \cite{BLYZ19} shall be needed.
\begin{lem}\cite[Lemma 4.1]{BLYZ19}\label{BL}
Suppose $\lambda_{1},\ldots, \lambda_{m}\in [0,1]$ are such that
\[
\lambda_{1}+\ldots+\lambda_{m}=1.
\]
Suppose further that $a_{1},\ldots, a_{m}\in \R$ are such that
\[
a_{1}\leq a_{2}\leq \ldots \leq a_{m}.
\]
Assume that there exists $\sigma_{0},\sigma_{1},\ldots,\sigma_{m}\in [0,\infty)$, with $\sigma_{0}=0$, and $\sigma_{m}=1$ such that
\[
\lambda_{1}+\ldots+\lambda_{k}\leq \sigma_{k}, \quad {\rm for}\ k=1,\ldots,m.
\]
Then
\[
\sum^{m}_{k=1}\lambda_{k}a_{k}\geq \sum^{m}_{k=1}(\sigma_{k}-\sigma_{k-1})a_{k}.
\]

\end{lem}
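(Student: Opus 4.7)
The plan is to rewrite both sides by Abel summation (summation by parts) into a form in which the hypothesis $\lambda_1+\cdots+\lambda_k\leq\sigma_k$ can be used termwise. I would introduce the partial sums
\[
L_k=\lambda_1+\cdots+\lambda_k,\qquad k=0,1,\ldots,m,
\]
so that $L_0=0$, $L_m=1$, and the hypothesis becomes simply $L_k\leq\sigma_k$ for every $k$.

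Applying summation by parts to the left-hand side, using $\lambda_k=L_k-L_{k-1}$ together with $L_0=0$ and $L_m=1$, yields
\[
\sum_{k=1}^{m}\lambda_k a_k=L_m a_m-L_0 a_1-\sum_{k=1}^{m-1}L_k(a_{k+1}-a_k)=a_m-\sum_{k=1}^{m-1}L_k(a_{k+1}-a_k).
\]
The identical rearrangement of the telescoping sum $\sum_{k=1}^m(\sigma_k-\sigma_{k-1})a_k$, exploiting $\sigma_0=0$ and $\sigma_m=1$, gives
\[
\sum_{k=1}^{m}(\sigma_k-\sigma_{k-1})a_k=a_m-\sum_{k=1}^{m-1}\sigma_k(a_{k+1}-a_k).
\]

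Subtracting the two expressions, the boundary terms $a_m$ cancel and I obtain the exact identity
\[
\sum_{k=1}^{m}\lambda_k a_k-\sum_{k=1}^{m}(\sigma_k-\sigma_{k-1})a_k=\sum_{k=1}^{m-1}(\sigma_k-L_k)(a_{k+1}-a_k).
\]
Every factor on the right is nonnegative: $\sigma_k-L_k\geq 0$ by hypothesis, and $a_{k+1}-a_k\geq 0$ by the monotonicity $a_1\leq a_2\leq\cdots\leq a_m$. Hence the right-hand side is nonnegative, which is precisely the claimed inequality.

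I do not anticipate a real obstacle here: the lemma is a purely combinatorial rearrangement, and the only point requiring care is that the Abel summation be executed with the correct boundary values. It is worth noting that the argument never uses any monotonicity of the $\sigma_k$'s individually; only the cumulative comparison $L_k\leq\sigma_k$ together with the matching endpoints $L_0=\sigma_0=0$ and $L_m=\sigma_m=1$ enters the proof, which matches exactly the generality in which the hypotheses are stated.
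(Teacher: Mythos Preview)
Your proof is correct: the Abel summation rewrite is valid, the boundary terms are handled properly using $L_0=\sigma_0=0$ and $L_m=\sigma_m=1$, and the resulting identity
\[
\sum_{k=1}^{m}\lambda_k a_k-\sum_{k=1}^{m}(\sigma_k-\sigma_{k-1})a_k=\sum_{k=1}^{m-1}(\sigma_k-L_k)(a_{k+1}-a_k)
\]
makes the inequality immediate from the hypotheses.

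As for comparison with the paper: the paper does not prove this lemma at all; it simply quotes it from \cite[Lemma~4.1]{BLYZ19} and uses it as a black box in the proof of Lemma~\ref{EE}. Your Abel-summation argument is in fact the standard proof of this elementary rearrangement inequality, so there is nothing to contrast.
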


\begin{lem}\label{EE}
Suppose $\varepsilon_{0}>0$. Suppose further that $(e_{1,j},\ldots,e_{n,j})$, where $j=1,2,\ldots,$ is  a sequence of ordered orthonormal bases of $\rnnn$ converging to the ordered orthonormal basis $(e_{1},...,e_{n})$, while $(a_{1,j},\ldots,a_{n,j})$ is a sequence of $n$-tuples satisfying
\[
0<a_{1,l}\leq a_{2,l}\leq \ldots\leq a_{n,j} \ and \ a_{n,j}\geq \varepsilon_{0}.
\]
For each $j=1,2,\ldots$, let
\[
E_{j}=\left\{x\in \rnnn:\frac{(x\cdot e_{1,j})^{2}}{a^{2}_{1,j}}+\ldots+\frac{(x\cdot e_{n,j})^{2}}{a^{2}_{n,j}}\leq 1\right\}
\]
denote the ellipsoids generated by the $(e_{1,j},\ldots,e_{n,j})$ and $(a_{1,l},\ldots,a_{n,l})$. Let $\mu$ be a nonzero finite Borel measure on $\sn$ satisfying the subspace mass inequality \eqref{space}. Then there exists $\delta_{0}, t_{0}\in (0,1)$ such that for each $j>N_{0}$, we have
\begin{equation}
\begin{split}
\label{EP}
\frac{1}{|\bar{\mu}_{j}|}\Phi_{E_{j},\bar{ \mu}_{j} }(o)\geq\log\left(\frac{\delta_{0}}{2}\right)+t_{0}\log a_{n,j}+\frac{1}{n}(1-t_{0})\log |E_{j}|-\frac{1}{n}(1-t_{0})\log\omega_{n}.
\end{split}
\end{equation}
\end{lem}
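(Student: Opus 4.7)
The plan is to estimate $\Phi_{E_j,\bar{\mu}_j}(o)$ by subdividing $\sn$ using the partition $\{A_{k,\delta_0}\}_{k=1}^n$ of \eqref{ER1} adapted to the limit basis $(e_1,\ldots,e_n)$, and then to couple the ordering of the semi-axes with the strict subspace mass inequality from Lemma \ref{IU} via the rearrangement result in Lemma \ref{BL}. First I would fix $\delta_0\in(0,1)$ small enough that $\sqrt{n}\,\delta_0\leq\eta_0$. For $j$ large, the convergence $e_{k,j}\to e_k$ forces $|v\cdot e_{k,j}|\geq \delta_0/2$ for every $v\in A_{k,\delta_0}$, so
\[
h(E_j,v)=\sqrt{\sum_{i=1}^n a_{i,j}^2(v\cdot e_{i,j})^2}\geq a_{k,j}\,|v\cdot e_{k,j}|\geq \tfrac{\delta_0}{2}\,a_{k,j}
\]
on $A_{k,\delta_0}$. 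Integrating $\log h(E_j,v)$ against $\bar{\mu}_j$ piecewise and dividing by $|\bar{\mu}_j|=|\mu|$ gives
\[
\frac{1}{|\bar{\mu}_j|}\Phi_{E_j,\bar{\mu}_j}(o)\geq \log(\delta_0/2)+\sum_{k=1}^n\lambda_k\log a_{k,j},\qquad \lambda_k:=\frac{\bar{\mu}_j(A_{k,\delta_0})}{|\mu|}.
\]

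Next, since the definition of $A_{i,\delta_0}$ forces $|v\cdot e_i|\leq \delta_0$ for all $i>k$ on any $v\in A_{1,\delta_0}\cup\cdots\cup A_{k,\delta_0}$, this union lies inside $\Re_{\sqrt{n-k}\,\delta_0}(\xi_k\cap\sn)\subset \Re_{\eta_0}(\xi_k\cap\sn)$ with $\xi_k=\mathrm{span}\{e_1,\ldots,e_k\}$. By Lemma \ref{IU}, this produces $\lambda_1+\cdots+\lambda_k\leq\tilde{\chi}_k$ for $k=1,\ldots,n-1$ (after replacing each $\tilde{\chi}_k$ by its running maximum if necessary, still strictly less than $k/n$). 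Setting $\sigma_k=\tilde{\chi}_k$ and $\sigma_n=1$, Lemma \ref{BL} applied to the nondecreasing sequence $\log a_{1,j}\leq\cdots\leq\log a_{n,j}$ yields
\[
\sum_{k=1}^n\lambda_k\log a_{k,j}\geq \sum_{k=1}^n(\tilde{\chi}_k-\tilde{\chi}_{k-1})\log a_{k,j}.
\]

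The last step is to show that this rearranged sum dominates $t_0\log a_{n,j}+\frac{1-t_0}{n}\log(|E_j|/\omega_n)$ for some $t_0\in(0,1)$. Using $\sum_k\log a_{k,j}=\log(|E_j|/\omega_n)$ and summation by parts on both sides, the desired inequality reduces to the termwise bound
\[
\sum_{k=1}^{n-1}\left(\frac{(1-t_0)k}{n}-\tilde{\chi}_k\right)(\log a_{k+1,j}-\log a_{k,j})\geq 0,
\]
which holds for $t_0:=1-\max_{1\leq k\leq n-1}\frac{n\tilde{\chi}_k}{k}\in(0,1)$ since each log-increment is nonnegative. Combining this with the first two displays yields \eqref{EP}. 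I expect this last step to be the main obstacle: producing a strictly positive $t_0$ and cleanly separating the $\log a_{n,j}$ and volume contributions out of the combinatorial output of Lemma \ref{BL} is the delicate point, but the Abel-summation reformulation turns it into a transparent consequence of the strict gap $\tilde{\chi}_k<k/n$ supplied by Lemma \ref{IU}, which is precisely what makes $t_0>0$ possible.
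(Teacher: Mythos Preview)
Your argument is correct and follows essentially the same route as the paper: partition via $\{A_{k,\delta_0}\}$, bound $h(E_j,v)\geq \tfrac{\delta_0}{2}a_{k,j}$ on $A_{k,\delta_0}$, control the partial sums $\lambda_1+\cdots+\lambda_k$ through Lemma~\ref{IU}, and feed the result into Lemma~\ref{BL}. The only organizational difference is that the paper absorbs the choice of $t_0$ into the application of Lemma~\ref{BL} itself by taking $\sigma_k=(1-t_0)\tfrac{k}{n}$ (with $t_0$ chosen so that $(1-t_0)\tfrac{k}{n}>\tilde{\chi}_k$), which makes $\sigma_k-\sigma_{k-1}=\tfrac{1-t_0}{n}$ for $k<n$ and $\sigma_n-\sigma_{n-1}=\tfrac{1-t_0}{n}+t_0$, so \eqref{EP} drops out immediately without your separate Abel-summation step.
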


\begin{proof}
For each $\delta\in (0,1/\sqrt{n})$, let $\{A_{1,\delta},\ldots, A_{n,\delta}\}$ be the partition of $\sn$ as in \eqref{ER1}.

Since $\mu$ satisfies the subspace mass inequality $\eqref{space}$, by lemma \ref{IU}, we know that there exists $N_{0}>0$, $\eta_{0}\in (0,1)$, and $\tilde{\chi}_{k}\in (0,\chi_{k})$ such that for all $j>N_{0}$, \eqref{kkk} holds for each $k$-dimensional proper subspace $\xi_{k}\subset \rnnn$. Let $t_{0}$ be sufficiently small so that
\[
(1-t_{0})\chi_{k}> \tilde{\chi}_{k}.
\]
Thus, for all $j>N_{0}$, there is
\begin{equation}\label{II}
\frac{\bar{\mu}_{j}(\Re_{\eta_{0}}(\xi_{k}\cap \sn))}{|\mu|}< (1-t_{0})\chi_{k}
\end{equation}
for each $k$-dimensional subspace $\xi_{k}\subset \rnnn$ and $k=1,\ldots,n-1$. Particularly, we let $\xi_{k}=span\{e_{1},\ldots,e_{k}\}$,

Note that for sufficiently small $\delta_{0}\in (0,1)$, one see
\[
\bigcup^{k}_{i=1}A_{i,\delta_{0}}\subset \Re_{\eta_{0}}(\xi_{k}\cap \sn).
\]
Now,  let
\[
\lambda_{i,\delta_{0}}=\frac{\bar{\mu}_{j}(A_{i,\delta_{0}})}{|\bar{\mu}_{j}|}.
\]
Note that
\[
\lambda_{1,\delta_{0}}+\ldots+\lambda_{n,\delta_{0}}=1.
\]
By virtue of \eqref{II}, we obtain
\[
\lambda_{1,\delta_{0}}+\ldots+\lambda_{k,\delta_{0}}=\frac{\sum^{k}_{i=1}\bar{\mu}_{j}(A_{i,\delta_{0}})}{|\mu|}<(1-t_{0})\chi_{k}
\]
for each $k=1,\ldots,n-1$.
Let $\sigma_{n}=1$ and $\sigma_{0}=0$ and $\sigma_{k}=(1-t_{0})\chi_{k}=(1-t_{0})\frac{k}{n}$ for $k=1,\ldots,n-1$. Then there are
\begin{equation}\label{W1}
\sigma_{1}-\sigma_{0}=\frac{1}{n}(1-t_{0}),
\end{equation}
\begin{equation}\label{W2}
\sigma_{k}-\sigma_{k-1}=\frac{1}{n}(1-t_{0}), \ k=2,\ldots,n-1,
\end{equation}
and
\begin{equation}\label{W3}
\sigma_{n}-\sigma_{n-1}=\frac{1}{n}(1-t_{0})+t_{0}.
\end{equation}
On the other hand, since $\lim_{j\rightarrow \infty}e_{k,j}=e_{k}$ for each $k=1,\ldots,n$, we may choose $j_{0}>0$, so that $|e_{k,j}-e_{k}|< \delta_{0}/2$ for each $j>j_{0}$ and each $j=1,\ldots,n$. Since $(e_{1,j},\ldots,e_{n,j})$ is orthonormal, by the definition of $E_{j}$ that $\pm a_{k,j}e_{k,j}\in E_{j}$, for $v\in A_{k,\delta_{0}}$, we have
\begin{equation}\label{W4}
h(E_{j},v)\geq a_{k,j}|e_{k,j}\cdot v|\geq a_{k,j}(|v\cdot e_{k}|-|e_{k,j}-e_{k}|)\geq a_{k,j}\frac{\delta_{0}}{2}.
\end{equation}
In view of the fact that $\{A_{k,\delta_{0}}\}^{n}_{k=1}$ is a partition of $\sn$. Together with \eqref{W4} and lemma \ref{BL}, for $j>j_{0}$, we have
\begin{equation}
\begin{split}
\label{TOT}
\frac{1}{|\bar{\mu}_{j}|}\Phi_{E_{j},\bar{ \mu}_{j} }(o)&=\frac{1}{|\bar{\mu}_{j}|}\int_{\sn}\log h(E_{j},\cdot)d\bar{\mu}_{j}\\
&\geq\sum^{n}_{k=1}\frac{\bar{\mu}_{j} (A_{k,\delta_{0}})}{|\bar{\mu}_{j}|}\log \left(a_{k,j}\frac{\delta_{0}}{2}\right)\\
&=\log \left(\frac{\delta_{0}}{2}\right)+\sum^{n}_{k=1}\lambda_{k,\delta_{0}}\log a_{k,j}\\
&\geq\log \left(\frac{\delta_{0}}{2}\right)+\sum^{n}_{k=1}(\sigma_{k}-\sigma_{k-1})\log a_{k,j}.
\end{split}
\end{equation}
Now, applying \eqref{W1}, \eqref{W2} and \eqref{W3} into \eqref{TOT}, we have
\begin{equation}
\begin{split}
\label{EP11}
\frac{1}{|\bar{\mu}_{j}|}\Phi_{E_{j},\bar{ \mu}_{j} }(o)&\geq \log \left(\frac{\delta_{0}}{2}\right)+\frac{1}{n}(1-t_{0})\sum^{n}_{k=1}\log a_{k,j}+t_{0}\log a_{n,j}.
\end{split}
\end{equation}
Then \eqref{EP} holds by substituting $|E_{j}|=\omega_{n}a_{1,j}a_{2,j}\ldots a_{n,j}$ into \eqref{EP11}.
\end{proof}
Now, we prove that $P_{j}$ is uniformly bounded.

\begin{lem}\label{PKL}
Let $\mu$ be a nonzero finite Borel measure on $\sn$ and $\bar{\mu}_{j}$ be as given in \eqref{UJ}. Let $\tilde{P}_{j}$ be as constructed in \eqref{ajlim}. If $\mu$ satisfies the subspace mass inequality $\eqref{space}$, the $\tilde{P}_{j}$ is uniformly bounded.
\end{lem}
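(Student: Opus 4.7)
The plan is to argue by contradiction. Suppose, along some subsequence which I still index by $j$, that $\mathrm{diam}(\tilde{P}_j)\to\infty$. The strategy is to compare $\tilde{P}_j$ with a suitable origin-centered ellipsoid and then combine the upper bound from Lemma \ref{PES} with the lower bound from Lemma \ref{EE} to reach a contradiction.

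First, for each $j$ let $E_j$ be the John ellipsoid of $\tilde{P}_j$ with center $c_j\in \mathrm{Int}(\tilde{P}_j)$, so that
\[
E_j\subset \tilde{P}_j\subset n(E_j-c_j)+c_j.
\]
Setting $F_j=E_j-c_j$, this is an origin-centered ellipsoid with semi-axes $0<a_{1,j}\le \cdots \le a_{n,j}$ along orthonormal directions $e_{1,j},\ldots,e_{n,j}$, and after passing to a further subsequence I may assume $e_{k,j}\to e_k$ for some orthonormal frame $(e_1,\ldots,e_n)$. The identity $T(\tilde{P}_j)=|\mu|/(n+2)$ (Lemma \ref{UY}), together with the de Saint-Venant inequality (Lemma \ref{JKT}), gives $|\tilde{P}_j|\ge c_1>0$, and $|\tilde{P}_j|\le n^n|F_j|$ then yields $|F_j|=\omega_n\prod_k a_{k,j}\ge c_2>0$ uniformly in $j$. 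Moreover $\mathrm{diam}(\tilde{P}_j)\le 2n\,a_{n,j}$, so the hypothesis $\mathrm{diam}(\tilde{P}_j)\to\infty$ forces $a_{n,j}\to\infty$; in particular $a_{n,j}\ge \varepsilon_0$ for some fixed $\varepsilon_0>0$ and large $j$.

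The key linking step is the $\Phi$-comparison. Since $\gamma(P_j)=o$ and $\tilde{P}_j$ is a positive rescaling of $P_j$, one has $\gamma(\tilde{P}_j)=o$, and therefore $o$ maximizes $\Phi_{\tilde{P}_j,\bar\mu_j}(\cdot)$ over $\tilde{P}_j$. Because $c_j\in \mathrm{Int}(\tilde{P}_j)$, Lemma \ref{PES} gives
\[
\Phi_{\tilde{P}_j,\bar\mu_j}(c_j)\le \Phi_{\tilde{P}_j,\bar\mu_j}(o)<c_0.
\]
The inclusion $F_j\subset \tilde{P}_j-c_j$ translates to $h(F_j,v)\le h(\tilde{P}_j,v)-c_j\cdot v$ for all $v\in\sn$, so after taking logarithms and integrating against $\bar\mu_j$,
\[
\int_{\sn}\log h(F_j,v)\,d\bar\mu_j \le \Phi_{\tilde{P}_j,\bar\mu_j}(c_j)<c_0.
\]
On the other hand, since $\mu$ satisfies the subspace mass inequality \eqref{space}, Lemma \ref{EE} applies to the centered ellipsoids $F_j$ and produces constants $\delta_0,t_0\in (0,1)$ with
\[
\frac{1}{|\bar\mu_j|}\int_{\sn}\log h(F_j,v)\,d\bar\mu_j\ge \log(\delta_0/2)+t_0\log a_{n,j}+\frac{1-t_0}{n}\log|F_j|-\frac{1-t_0}{n}\log\omega_n.
\]
Since $a_{n,j}\to\infty$, $|F_j|\ge c_2$, and $|\bar\mu_j|=|\mu|$ is fixed, the right-hand side diverges to $+\infty$, contradicting the previous upper bound. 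Hence $\tilde{P}_j$ is uniformly bounded.

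The main obstacle is that John's ellipsoid of $\tilde{P}_j$ is generally not centered at the origin, while Lemma \ref{EE} is phrased for origin-centered ellipsoids. The resolution is exactly the manoeuvre above: one translates to the centered ellipsoid $F_j=E_j-c_j$ and uses the maximality of the origin for $\Phi_{\tilde{P}_j,\bar\mu_j}(\cdot)$, inherited from the extremality of $P_j$, to transfer the Lemma \ref{PES} bound at $o$ to a bound at $c_j$, which then dominates the integral of $\log h(F_j,\cdot)$ that Lemma \ref{EE} controls from below.
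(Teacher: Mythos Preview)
Your proof is correct and follows essentially the same route as the paper: contradiction via the John ellipsoid, the translation $F_j=E_j-c_j$ to an origin-centered ellipsoid, the comparison $\Phi_{\tilde P_j,\bar\mu_j}(c_j)\le \Phi_{\tilde P_j,\bar\mu_j}(o)$ coming from $\gamma(\tilde P_j)=o$, and then Lemma~\ref{EE} against Lemma~\ref{PES}. The only cosmetic difference is that you bound $|F_j|$ from below via de Saint-Venant applied to $\tilde P_j$ and the volume inclusion $|\tilde P_j|\le n^n|F_j|$, whereas the paper bounds $T(E_j)$ from below via monotonicity of $T$ and then converts back to $|E_j|$ through de Saint-Venant on $E_j$; these are equivalent.
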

\begin{proof}
We take by contradiction and assume that $\tilde{P}_{j}$ is not uniformly bounded.

Let $E_{j}$ be the John ellipsoid of $\tilde{P}_{j}$, then
\begin{equation}\label{EJJ}
E_{j}\subset \tilde{P}_{j}\subset n(E_{j}-o_{j})+o_{j},
\end{equation}
where the ellipsoid $E_{j}$ centered at $o_{j}\in Int (\tilde{P}_{j})$ is given by
\[
E_{j}=\left\{x\in \rnnn:\frac{|(x-o_{j})\cdot e_{1,j}|^{2}}{a^{2}_{1,j}}+\ldots+\frac{|(x-o_{j})\cdot e_{n,j}|^{2}}{a^{2}_{n,j}}\leq 1\right\}
\]
for a sequence of ordered orthonormal bases $(e_{1,j},\ldots,e_{n,j})$ of $\rnnn$, with
$0<a_{1,l}\leq \ldots \leq a_{n,l}$. Since $\tilde{P}_{j}$ is not uniformly bounded, by choosing a subsequence, we may assume $a_{n,j}\rightarrow \infty$ and $a_{n,j}\geq 1$. By means of the compactness of $\sn$, we can take a subsequence and assume that $\{e_{1,j},\ldots, e_{n,j}\}$ converges to $\{e_{1},\ldots, e_{n}\}$ as an orthonormal basis in $\rnnn$. Using Lemma \ref{JKT},  Lemma \ref{EE}, \eqref{EJJ}, there exists $\delta_{0}$, $t_{0}$, $c_{n}$ and $N_{0}>0$ such that
\begin{equation}
\begin{split}
\label{QWE}
\frac{1}{|\bar{\mu}_{j}|}\Phi_{\tilde{P}_{j},\bar{ \mu}_{j} }(o_{j})&\geq \frac{1}{|\bar{\mu}_{j}|}\Phi_{E_{j},\bar{ \mu}_{j} }(o_{j})\\
&= \frac{1}{|\bar{\mu}_{j}|}\Phi_{E_{j}-o_{j},\bar{ \mu}_{j} }(o)\\
&\geq\log\left(\frac{\delta_{0}}{2}\right)+t_{0}\log a_{n,j}+\frac{1}{n}(1-t_{0})\log |E_{j}|-\frac{1}{n}(1-t_{0})\log\omega_{n}\\
&\geq\log\left(\frac{\delta_{0}}{2}\right)+t_{0}\log a_{n,j}+\frac{1}{n+2}(1-t_{0})\log |T(E_{j})|+c(n,t_{0}),
\end{split}
\end{equation}
where $c(n,t_{0})$ is not necessarily positive.

To proceed further, with the aid of the homogeneity, monotonicity and translation invariance of $T$, \eqref{IS} and \eqref{EJJ}, we obtain
\begin{equation}
\begin{split}
\label{JIU}
T(E_{j})=T(E_{j}-o_{j})&=n^{-\frac{1}{n+2}}T(n(E_{j}-o_{j})+o_{j})\\
&\geq n^{-\frac{1}{n+2}} T(\tilde{P}_{j})=n^{-\frac{1}{n+2}}\frac{1}{n+2}|\bar{\mu}_{j}|.
\end{split}
\end{equation}

Recall that  $\gamma_{j}(P_{j})$ is the minimizer to \eqref{LP} with $\gamma_{j}(P_{j})=o$, by applying \eqref{PPJ}, we know that $\gamma_{j}(\tilde{P}_{j})=o$, this combines with $|\bar{\mu}_{j}|=|\mu|$, \eqref{JIU}, and that $a_{n,j}\rightarrow \infty$, from \eqref{QWE}, we have
\begin{equation*}\label{}
\Phi_{\tilde{P}_{j},\bar{\mu}_{j}}(o)\geq\Phi_{\tilde{P}_{j},\bar{\mu}_{j}}(o_{j})\rightarrow \infty, \  {\rm as} \ j\rightarrow \infty,
\end{equation*}
which contradicts to Lemma \ref{PES}. The proof is completed.
\end{proof}

\begin{lem}\label{POI}
If $\tilde{P}_{j}$ in \eqref{ajlim} are uniformly bounded and $T(\tilde{P}_{j})>c_{0}$ for some constant $c_{0}>0$, then there exists a convex body $\Omega\in \koo$ with $o\in \Omega$ such that
\begin{equation}\label{HG}
h(\Omega,\cdot)\mu^{tor}(\Omega,\cdot)=\mu.
\end{equation}
\end{lem}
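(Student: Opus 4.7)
My plan is the standard Blaschke-selection-plus-weak-continuity argument, with the subtleties being (a) ruling out degeneration of the limit to a lower-dimensional set, and (b) passing to the limit inside the product $h(\tilde{P}_j,\cdot)\,\mu^{tor}(\tilde{P}_j,\cdot)$.

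First, by hypothesis $\{\tilde{P}_j\}$ is uniformly bounded, so by the Blaschke selection theorem (\cite[Theorem 1.8.7]{S14}) there is a subsequence, still denoted $\tilde{P}_j$, converging in the Hausdorff metric to a compact convex set $\Omega$. Since $\tilde{P}_j$ is a rescaling of $P_j$, for which $\gamma(P_j)=o\in \mathrm{Int}(P_j)$, every $\tilde{P}_j$ contains $o$, so $o\in \Omega$. To show $\Omega$ is genuinely $n$-dimensional, I invoke the de Saint Venant inequality (Lemma \ref{JKT}): the hypothesis $T(\tilde{P}_j)>c_0$ yields $|\tilde{P}_j|\geq (n(n+2)\omega_n^{2/n} c_0)^{n/(n+2)}$, hence by continuity of volume under Hausdorff convergence $|\Omega|>0$, so $\Omega\in \koo$.

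Next I pass to the limit in the identity $h(\tilde{P}_j,\cdot)\,\mu^{tor}(\tilde{P}_j,\cdot)=\bar{\mu}_j$. Since $\tilde{P}_j\to \Omega$ in the Hausdorff sense and $o\in \Omega$, the support functions $h(\tilde{P}_j,\cdot)$ converge uniformly to $h(\Omega,\cdot)$ on $\sn$, and by Lemma \ref{meau1}(d) the torsion measures $\mu^{tor}(\tilde{P}_j,\cdot)$ converge weakly to $\mu^{tor}(\Omega,\cdot)$. For any $\phi\in C(\sn)$, a standard $3\varepsilon$-argument gives
\[
\int_{\sn}\phi\,h(\tilde{P}_j,\cdot)\,d\mu^{tor}(\tilde{P}_j,\cdot)\longrightarrow \int_{\sn}\phi\,h(\Omega,\cdot)\,d\mu^{tor}(\Omega,\cdot),
\]
because the ``error'' $\int \phi (h(\tilde{P}_j,\cdot)-h(\Omega,\cdot))d\mu^{tor}(\tilde{P}_j,\cdot)$ is controlled by $\|\phi\|_\infty \cdot \mathcal{D}(\tilde{P}_j,\Omega)\cdot \mu^{tor}(\tilde{P}_j,\sn)$, and the total mass $\mu^{tor}(\tilde{P}_j,\sn)$ is uniformly bounded (because $\tilde{P}_j$ are uniformly bounded, so by Lemma \ref{argu} the gradients are bounded on $\partial \tilde{P}_j$ and $\mathcal{H}^{n-1}(\partial\tilde{P}_j)$ is bounded). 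Thus $h(\tilde{P}_j,\cdot)\,\mu^{tor}(\tilde{P}_j,\cdot)\rightharpoonup h(\Omega,\cdot)\,\mu^{tor}(\Omega,\cdot)$ weakly on $\sn$.

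Finally, the construction in Section \ref{Sec5} gives $\bar{\mu}_j\rightharpoonup \mu$ weakly, and by uniqueness of weak limits the two sides of $h(\tilde{P}_j,\cdot)\,\mu^{tor}(\tilde{P}_j,\cdot)=\bar{\mu}_j$ converge respectively to $h(\Omega,\cdot)\,\mu^{tor}(\Omega,\cdot)$ and $\mu$, establishing \eqref{HG}. The main delicate point I foresee is justifying that the limit body $\Omega$ has $o\in \Omega$ and that uniform convergence of $h(\tilde P_j,\cdot)$ combined with weak convergence of $\mu^{tor}(\tilde P_j,\cdot)$ is enough to take the limit inside the product; both turn out to follow from the uniform $a priori$ bounds coming from Lemma \ref{PKL} and Lemma \ref{UY}.
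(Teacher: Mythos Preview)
Your proposal is correct and follows essentially the same approach as the paper: Blaschke selection, non-degeneracy of the limit via de Saint Venant, and passage to the limit using uniform convergence of support functions together with weak convergence of the torsion measures. The only cosmetic difference is that the paper first invokes continuity of $T$ to get $T(\Omega)>0$ and then applies Lemma~\ref{JKT} to the limit body, whereas you apply Lemma~\ref{JKT} to each $\tilde P_j$ and use continuity of volume; your more explicit $3\varepsilon$-argument for the product limit is a welcome elaboration of what the paper states in one sentence.
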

\begin{proof}
Since $\tilde{P}_{j}$ is uniformly bounded, with the aid of the Blaschke selection theorem, there exists a subsequence $\tilde{P}_{j_{i}}$ such that $\tilde{P}_{j_{i}}\rightarrow \Omega$ for some compact convex set $\Omega$ containing the origin. By virtue of the continuity of $T$ and the fact that $T(\tilde{P}_{j})>c_{0}>0$, one see $T(\Omega)>0$. From Lemma \ref{JKT}, we know that $|\Omega|\geq c_{1}>0$, which implies that $\Omega$ has nonempty interior. Finally, by taking the limit of \eqref{ajlim} on both sides and employing the weak convergence of $\mu^{tor}(\cdot)$ and the uniform continuity of the support function, we conclude that \eqref{HG} holds.
\end{proof}

From Lemma \ref{UY}, Lemma \ref{PKL} and Lemma \ref{POI}, we shall give a solution to the general torsion log-Minkowski problem described as below.

\begin{theo}\label{IUY}
If $\mu$ is a finite, non-zero Borel measure on $\sn$ satisfying the subspace mass inequality \eqref{space}. Then there exists a convex body $\Omega\subset \rnnn$  with $o\in \Omega$ such that
\[
G^{tor}(\Omega,\cdot)=\mu.
\]

\end{theo}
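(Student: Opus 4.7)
The plan is to realize $\Omega$ as a Hausdorff limit of polytopal solutions to the discrete torsion log-Minkowski problem, using the constructions assembled in the excerpt. First I would fix the discretization scheme already introduced: partition $\sn$ into cells $U_{i,j}$ of diameter at most $1/j$, pick representatives $v_{i,j}\in U_{i,j}$ in general position, form $\mu_{j}=\sum_{i}\bigl(\mu(U_{i,j})+N_{j}^{-2}\bigr)\delta_{v_{i,j}}$, and renormalize to $\bar{\mu}_{j}=(|\mu|/|\mu_{j}|)\mu_{j}$ so that $|\bar{\mu}_{j}|=|\mu|$ and $\bar{\mu}_{j}\rightharpoonup\mu$ weakly. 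Applying Theorem \ref{main21} (the discrete case) to each $\bar{\mu}_{j}$, I obtain polytopes $\tilde{P}_{j}$ containing the origin in their interior with $G^{tor}(\tilde{P}_{j},\cdot)=\bar{\mu}_{j}$, and (after a centering translation based on the minimizer of the extremal problem \eqref{LP}) I may assume $\gamma_{j}(\tilde{P}_{j})=o$.

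Next I would verify the three hypotheses needed to pass to the limit. The lower bound $T(\tilde{P}_{j})=\frac{1}{n+2}|\mu|=:c_{0}>0$ is immediate from Lemma \ref{UY}, since $T(\tilde{P}_{j})=G^{tor}(\tilde{P}_{j},\sn)=|\bar{\mu}_{j}|/(n+2)$. The upper bound, namely uniform boundedness of the sequence $\{\tilde{P}_{j}\}$, is supplied by Lemma \ref{PKL}, which is precisely where the subspace mass inequality \eqref{space} enters in an essential way. Once both bounds are in hand, Lemma \ref{POI} produces a convergent subsequence $\tilde{P}_{j_{i}}\to\Omega$ in the Hausdorff metric, with $\Omega$ a convex body satisfying $o\in\Omega$, and the weak continuity of $\mu^{tor}$ from Lemma \ref{meau1}(d) together with the uniform continuity of the support function on $\sn$ gives the convergence
\[
h(\tilde{P}_{j_{i}},\cdot)\,\mu^{tor}(\tilde{P}_{j_{i}},\cdot)\;\longrightarrow\; h(\Omega,\cdot)\,\mu^{tor}(\Omega,\cdot)
\]
weakly; comparing with $\bar{\mu}_{j_{i}}\rightharpoonup\mu$ and recalling the normalization factor $n+2$ in the definition \eqref{CTC} yields $G^{tor}(\Omega,\cdot)=\mu$.

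The genuinely substantive step in this plan is the uniform boundedness, and this is where I would expect to spend almost all the work had the ingredients not already been isolated. The strategy there is the now-standard one: replace $\tilde{P}_{j}$ by its John ellipsoid $E_{j}$ so that $E_{j}\subset \tilde{P}_{j}\subset n(E_{j}-o_{j})+o_{j}$, and assume for contradiction that the largest semi-axis $a_{n,j}$ of $E_{j}$ tends to infinity along a subsequence while the orthonormal frames $(e_{1,j},\ldots,e_{n,j})$ converge to $(e_{1},\ldots,e_{n})$. Using the spherical partition $\{A_{k,\delta_{0}}\}$ aligned with $(e_{1},\ldots,e_{n})$, the strengthened subspace mass inequality from Lemma \ref{IU} (valid for $\bar{\mu}_{j}$ with $j$ large because $\bar{\mu}_{j}\rightharpoonup\mu$), together with the rearrangement Lemma \ref{BL}, yields the lower estimate for $\Phi_{E_{j},\bar{\mu}_{j}}(o)$ in Lemma \ref{EE}. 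Combining this with the de Saint Venant inequality \eqref{IS} to bound $|E_{j}|$ from below in terms of $T(\tilde{P}_{j})=|\mu|/(n+2)$ forces $\Phi_{\tilde{P}_{j},\bar{\mu}_{j}}(o)\to\infty$, contradicting the uniform upper bound of Lemma \ref{PES}. This blow-up argument along the longest John axis is the heart of the matter, and it is the only place where the asymmetry of $\mu$ has to be handled without the convenience of central symmetry that underlies the result of \cite{LU23}.
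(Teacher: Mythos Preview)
Your proposal is correct and follows essentially the same approach as the paper: the theorem is deduced directly from Lemma~\ref{UY} (lower bound on $T(\tilde P_j)$), Lemma~\ref{PKL} (uniform boundedness via the John-ellipsoid blow-up argument under the subspace mass inequality), and Lemma~\ref{POI} (passage to the Hausdorff limit using weak continuity of $\mu^{tor}$). Your outline of the contradiction argument behind Lemma~\ref{PKL}---strengthened inequality from Lemma~\ref{IU}, spherical partition, rearrangement Lemma~\ref{BL}, and comparison with the upper bound of Lemma~\ref{PES}---matches the paper's structure exactly.
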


\end{document}